\newcounter{mythm}
\numberwithin{mythm}{section}
\declaretheorem[style=plain, sibling=mythm]{theorem}
\declaretheorem[style=plain, sibling=mythm]{lemma}
\declaretheorem[style=plain, sibling=mythm]{corollary}
\declaretheorem[style=definition, sibling=mythm]{definition}
\declaretheorem[style=definition, sibling=mythm]{example}
\declaretheorem[style=remark, sibling=mythm]{remark}
\DeclareMathOperator{\Vol}{Vol}
\DeclareMathOperator{\diam}{diam}
\newcommand{\pluseq}{\mathrel{+}=}
\renewcommand{\d}{\,\mathrm{d}}
\newcommand{\sums}{\sideset{}{'} \sum}
\renewcommand{\Re}[1]{\mathrm{Re}(#1)}
\renewcommand{\Im}[1]{\mathrm{Im}(#1)}
\newcommand{\new}[1]{#1}
\begin{document}
\title[Computation of dipolar interactions with
PBC]{Zeta Expansion for Long-Range Interactions under Periodic Boundary Conditions with Applications to Micromagnetics}

\author[Buchheit]{Andreas A. Buchheit$^{1,2}$\orcidlink{0000-0003-4004-713X}}
\address{\textsuperscript{1} Department of Mathematics, Saarland University, 66123 Saarbrücken, Germany}
\address{\textsuperscript{2} Department of Mathematics, ETH Zürich, Rämistrasse 101, Zürich, 8092, Switzerland
}

\author[Busse]{Jonathan K. Busse$^{1,3}$\orcidlink{0009-0001-3323-3455}}
\address{\textsuperscript{3} High-Performance Computing Department, German Aerospace Center (DLR), 51147 Cologne, Germany}

\author[Keßler]{Torsten Keßler\textsuperscript{4}\orcidlink{0000-0003-2530-6746}}
\address{\textsuperscript{4} Department of Mechanical Engineering, Eindhoven University of Technology, 5600
MB Eindhoven, Netherlands}

\author[Rybakov]{Filipp N. Rybakov\textsuperscript{5}\orcidlink{0000-0002-3577-7966}}
\address{\textsuperscript{5} Department of Physics and Astronomy, Uppsala University, Box-516, Uppsala SE-751 20, Sweden}

\begin{abstract}
We address the efficient computation of power-law-based interaction potentials of homogeneous $d$-dimensional bodies with an infinite $n$-dimensional array of copies, including their higher-order derivatives. This problem forms a serious challenge in micromagnetics with periodic boundary conditions and related fields. 
Nowadays, it is common practice to truncate the associated infinite lattice sum to a finite number of images, introducing uncontrolled errors. We show that, for general interacting geometries, the exact infinite sum for both dipolar interactions and generalized Riesz power-law potentials can be obtained by complementing a small direct sum by a correction term that involves efficiently computable derivatives of generalized zeta functions. We show that the resulting representation converges exponentially in the derivative order, reaching machine precision at a computational cost no greater than that of truncated summation schemes. In order to compute the generalized zeta functions efficiently, we provide a superexponentially convergent algorithm for their evaluation, as well as for all required special functions, such as incomplete Bessel functions. Magnetic fields and related quantities can thus be evaluated to machine precision in arbitrary cuboidal domains periodically extended along one or two dimensions. We benchmark our method against known formulas for magnetic interactions and against direct summation for Riesz potentials with sufficiently large exponents, consistently achieving full precision. In addition, we identify new corrections to the asymptotic limit of the demagnetization field and tabulate high-precision benchmark values that can be used as a reliable reference for micromagnetic solvers. The techniques developed are broadly applicable, with direct impact in other areas such as molecular dynamics.
\end{abstract}

\keywords{Epstein zeta function, micromagnetics, numerical analysis,  lattice sums, meromorphic continuation, condensed matter physics, number theory}

\maketitle

\begin{figure}[h!]
    \centering
\includegraphics[width=0.7\linewidth]{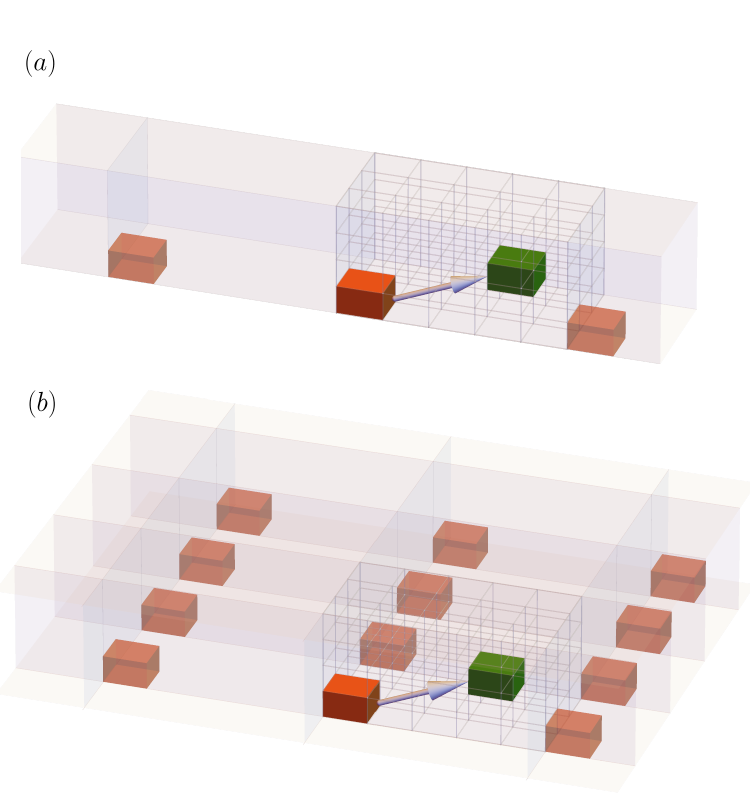}
    \caption{
    Sketch illustrating a \new{3D system 
    of} $N_x\times N_y\times N_z=5\times 3\times 4$
    interacting cuboids repeated
    infinitely in $x$ direction (a), and both in $x$ and $y$ direction (b).
    The green cuboid is influenced by the red cuboid indicated by the arrow, as well as by the total influence of all opaque red cuboids
    }
    \label{fig:boxes}
\end{figure}

\section{Introduction and problem description}
Microscopic textures of magnetization in magnetic materials are extremely diverse, interesting for applications, and challenging to observe and explain. 
Numerous studies with a century-long history are devoted to the theory of micromagnetic states~\cite{LL1935, Stewart1954, KITTEL1956437, ArrottGoldman1959, Brown1962, CraikTebble1965, MalozemoffSlonczewski_1979, Hubert, DeSimone_chapter4_2006}. 
The ``micromagnetics'' formulated back in 1959~\cite{micromagnetics_Brown, micromagnetics_Aharoni, Brown1963} is a powerful theoretical approach to understanding these phenomena.
Approaches to solving micromagnetic problems can be divided into two large classes: with the explicit involvement of magnetic field potential (either scalar or vector) or without it~\cite{Asselin_Thiele_1986, MicromagneticsRevisited}.
Naturally, both approaches have their advantages and disadvantages.

The well-known advantages of potential-based approaches include those provided by finite element methods. 
Namely, the ability to accurately simulate samples of complex shapes by representing them as composed of simplices such as tetrahedrons.
At the same time, it is challenging to accurately account for fields outside the sample -- semi-infinite regions where the decay has an algebraic nature. 
There is a large literature devoted to the relevant methods; for finite elements in micromagnetics, see Refs.~\cite{Schrefl_FEM_2007, Hertel_FEM_2007, ABERT201329, EXL2019179}, and for exterior issues, e.g. Neumann-to-Dirichlet map, see Refs.~\cite{IngermanDruskinKnizhnerman, MURATOV2006637, Druskin_SIAM_Review_2016}.
However, there is an important and interesting feature here.
In those dimensions where periodic boundary conditions (PBC) are assumed, the non-trivial component of the potential field is naturally a periodic function, and, accordingly, the problem of semi-infinities disappears along the corresponding dimensions.

On the other hand, potential-free approaches are well applicable for rectangular geometries. 
One strong advantage is the translation invariance when discretized by identical cuboids (rectangular prisms). 
The calculation of the interaction of ``everyone with everyone'' is accelerated by applying the fast Fourier transform (FFT) and the convolution theorem for zero-padded arrays~\cite{Yuan_1992, Hayashi_1996, Hubert}. 
Convolution kernels can be precomputed once for a given geometry and used at each iteration as constant arrays. 
However, in contrast to potential-based methods, the challenge here is precisely the case of PBC, see Fig.~\ref{fig:boxes}. 
This is because the convolution kernel requires calculating sums of infinite and slowly converging series for each of the dimensions with PBC, see~\cite{Lebecki_2008}.
It is important to emphasize that in modern studies, these sums are approximated by a naive cutoff technique. 
For example, in popular software packages such as MuMax3~\cite{MuMax3}, the corresponding parameter, {\sffamily \mbox{SetPBC(int, int, int)}}, is the number of ``repeats'', and due to the related time consumption, these integers rarely exceed the value of~10, see e.g. the Supplemental Material in~\cite{PhysRevLett.118.087202} and Refs.~\cite{Pierobon2018, PhysRevB.111.184423, Treves2025, JEFREMOVAS2025100036}. 
Some other relevant software packages are as follows: OOMMF~\cite{OOMMF}, BORIS~\cite{Boris_software}, magnum.np~\cite{Bruckner2023}, Ubermag~\cite{Ubermag_software}, \new{Julia~\cite{Wang_2024}}.
The above-mentioned sums cannot be reliably approximated by classic techniques such as Ewald summation~\cite{ewald1921berechnung}, although in some special cases, such as when the cuboids are cubes, there is numerical evidence of significant improvement in accuracy~\cite{Lebecki_2008}.
In general, the problem exists because the interaction potentials are functions with rather complex profiles such as compositions of power, trigonometric and hyperbolic functions, which are further complicated by dependencies on the aspect ratios of the cuboid (see Refs.~\cite{RhodesRowlands1954, Hubert} and \ref{app:cuboids_classical}).
As a result, truncating the series and/or replacing functions with simpler ones when calculating dipole-dipole interactions leads to an inconsistency of the boundary conditions in the sense that the periodicity of local interactions is exact, but that of non-local interactions is inaccurate. 
This causes discrepancies that can no longer be reduced by mesh refinement. \new{We note that efficient Ewald-type schemes that treat the infinite sums for point charges for quasi two-dimensional geometries have been developed for Coulomb interactions in \cite{gan2025fast,gan2025random}. However, these methods are not directly applicable in the micromagnetic settings, where one must evaluate more general interaction potentials between finite-volume domains together with their derivatives.}

The present work solves this problem. Some of the authors have recently generalized the classic Euler--Maclaurin summation formula 
to include power-law long-range interactions on higher-dimensional lattices \cite{buchheit2022efficient,buchheit2025computation,buchheit2023exact}, leading to the singular Euler-Maclaurin (SEM) expansion. This method \new{allows one to exactly describe} discrete lattices in terms of their continuous analogs with an efficiently computable correction term \cite{buchheit2023exact}.  In a recent major advancement, we have extended this method to lattice subsets with boundaries \cite{buchheit2025computation}. These methods are based on efficiently computable generalized zeta functions \cite{buchheit2024epstein} and allow for the computation of non-trivial high-dimensional lattice sums and their meromorphic continuations to full precision. In this work, we further generalize this framework to the use case appearing in micromagnetics.
We show how to compute interaction energies in domains $\Omega\subset\mathds R^d$, typically three-dimensional cuboids, with infinite arrays of periodic images, arising from general Riesz-type long-range interactions of the form $|\bm{r}|^{-\nu}$, including their meromorphic continuation in $\nu$ efficiently to full precision. 
Importantly, our framework supports the evaluation of interaction energy derivatives of arbitrary order. By choosing derivatives up to order~2 and $\nu = 1$, we recover all quantities required for the simulation of micromagnetic systems in $d$~dimensions, periodically repeated along an $n$-dimensional subarray with ${n < d}$. 
Note that for micromagnetic problems, the natural dimension of space is $d=3$, and lattices are typically one-dimensional $(n=1)$ or two-dimensional $(n=2)$, which corresponds to the geometry of wires, Fig.~\ref{fig:boxes}(a), or films, Fig.~\ref{fig:boxes}(b), respectively.
The method, however, extends to general interactions and geometries.

We begin our exposition by introducing the generalized interaction potential of a body $\Omega$ with an array of copies, each centered at a point in a set $L\subseteq \mathds R^d$.
\begin{definition}[Generalized potential with periodic boundary conditions]
Let $\Omega$ be a compact domain in $\mathds R^d$. We define the potential function of a source object $\Omega$ located at position $\bm{r}\in\mathds R^d$, interacting with an $n$-dimensional arbitrary Bravais lattice $L$ in $d$-dimensional space of identical objects.
For an interaction exponent $\nu > n$, and a multi-index of derivative orders $\bm m \in \mathbb{N}^d$, the potential reads
\begin{equation}
   U^{(\bm m)}(\bm r)= \sum_{\bm z\in L}  \int_{\Omega+\bm z} \nabla_{\bm r}^{\bm m} \int_{\Omega+\bm r} \vert \bm r'-\bm r''\vert^{-\nu}\,\mathrm d \bm r'\,\mathrm d \bm r''.
\label{U}
\end{equation}
The potential can be meromorphically continued to $\nu \in \mathds C$. 
\end{definition}

The above potential function allows us to evaluate all potentials and magnetic fields appearing in micromagnetics. While highly efficient methods exist for evaluating the double integral for single summands, see \ref{app:cuboids_classical}, the evaluation of the infinite and slowly decaying lattice sum has posed severe computational problems so far, as we described above.
Fast methods for kernel evaluation like the Fast Multipole Method~\cite{Greengard1997,Cheng1999} and related methods
like Adaptive Cross Approximation~\cite{Bebendorf2000,BebendorfRjasanow2003} or polynomial interpolation~\cite[Chapter~4]{Boerm2010} speed up the nested
integrals over the interaction potential as they separate the integral into two independent ones.
But they do not accelerate the convergence of the infinite sum as the leading terms of the
corresponding expansions only decay algebraically
and hence do not improve the slow convergence of the infinite sum.
In contrast, here we leverage efficiently computable generalizations of the Riemann zeta function to compute the infinite lattice sum at a negligible additional cost in runtime compared to standard methods that truncate the real space sum. This permits the precise and efficient study of periodic magnetic textures with long-range interactions with vast application potential.

This work is structured as follows. In Section \ref{sec:main_result}, we present our efficiently computable representation of the generalized micromagnetics potential. In Section~\ref{sec:num_results}, we subsequently compare our method against direct summation for large exponents and against known results, obtaining full precision across the complete parameter range. \new{We further obtain} new asymptotic corrections to known formulas for the demagnetization field and provide high-precision reference values for future implementations in micromagnetics packages. In Section~\ref{sec:generalized_zeta_computation}, we discuss how to efficiently evaluate all special functions, which are required for our algorithm. In particular, we discuss how to evaluate the arising incomplete Bessel functions to full precision. We draw our conclusions and briefly describe how the results of this work can be applied to other problems in condensed matter physics and other disciplines
in Section~\ref{sec:outlook}.  

\section{Main result}
\label{sec:main_result}

In this work, we solve the problem of efficiently computing the infinite sums that arise in micromagnetic simulations to full precision. Our approach divides the sum into two parts, a small direct sum, already implemented in standard packages such as MuMax3, and a correction term that eliminates the truncation error. We then show that this correction term can be expressed in terms of derivatives of a generalized zeta function. By presenting an efficient algorithm for computing these derivatives to full precision, we enable the evaluation of the infinite lattice sum at essentially the same numerical cost as the truncated sum.

We will need Fourier transforms in the following, for which we choose the following convention.
\begin{definition}[Fourier transformation]
For an integrable function $f : \mathds R^d \to \mathds C$ the Fourier transformation  $\mathcal F: f \mapsto \hat f$
is given by
\begin{equation*}
\hat f(\bm \xi) = \int_{\mathds R^d} f(\bm x) e^{-2 \pi i \bm x \cdot \bm \xi} \, \text d \bm x,
\quad \bm \xi \in \mathds R^d.
\end{equation*}
\end{definition}

We subsequently define the zeta function for a point set $L\subseteq \mathds R^d$, shifted by a vector  $\bm r\in \mathds R^d$.
\begin{definition}[Set zeta function]
Let $L\subseteq \mathds R^d$ be a uniformly discrete set, $\bm r\in \mathds R^d$, and $\nu\in \mathds C$ with $\mathrm{Re}(\nu)>d$. We then define the  zeta function of the set  $L\backslash\{\bm r\}$ as
the meromorphic continuation of
\begin{equation*}
Z_{L,\nu}(\bm r)=\sideset{}{'}\sum_{\bm z\in L}\frac{1}{\vert \bm z-\bm r \vert^\nu},
\end{equation*}
to $\nu\in\mathds C$,
where the primed sum excludes the summand where $\bm z=\bm r$. 
\end{definition}
Note that in contrast to \cite{buchheit2025computation}, the argument of the zeta function here refers to the shift in position. The resulting zeta function is related to the Epstein zeta function
originally introduced by Paul Epstein in 1903 in
\cite{epstein1903theorieI,epstein1903theorieII}.
\new{For some} lattice $\Lambda=A\mathds Z^{d}$, $A\in\mathds{R}^{d\times d}$ regular, a \new{shift vector} $\bm r\in\mathds R^d$, and a \new{wave vector} $\bm k\in\mathds R^d$, the Epstein zeta function is defined as the lattice sum
$$
Z_{\Lambda,\nu}(\bm r,\bm k)=
\sums_{\bm z\in \Lambda}\frac{e^{-2\pi i \bm k\cdot\bm z}}{|\bm z-\bm r|^{\nu}},\qquad \Re\nu>d,
$$
meromorphically continued to $\nu\in\mathds C$. 
The Epstein zeta function is a generalization of the Riemann zeta function to oscillatory lattice sums in multiple dimensions and is of particular interest in the physics of long-range interacting lattice systems \cite{buchheitEpsteinZetaMethod2025}.
The set zeta function was recently introduced by some of the authors in \cite{buchheit2025computation} as a generalization to geometries without translational invariance.

The following theorem is our first main result. It provides a way to correct the error introduced by truncating the lattice sum through efficiently computable derivatives of zeta functions, yielding an exact expression for the full sum. We refer to this method as \emph{zeta expansion}. To present the result in a compact form, we adopt standard multi-index notation, i.e., $\bm \alpha! = \alpha_1! \alpha_2! \dots \alpha_d!$. The proof relies on the fact that the set zeta function is holomorphic in $\bm r$ on a subset of $\mathds C^d$, see Corollary~\ref{cor:holomorphy}, and can be expanded into a Taylor series that converges uniformly on every ball that does not intersect $L$. This Corollary follows directly from Theorem~\ref{thm:crandall} that also yields a computationally efficient representation of the infinite sum.

\begin{theorem}[Zeta expansion for micromagnetics]
\label{thm:sem}
Consider $L=A\mathds Z^n\times \{0\}^{d-n}$ with $A\in \mathds R^{n\times n}$ regular and $n\in \mathds N_+$ with $n\le d$, a multi-index of derivative orders $\bm m\in \mathds N^d$, and a compact domain $\Omega\subset \mathds R^d$. 
For $\bm r\in \mathds R^d$, choose a symmetric near-field lattice $L_\mathrm{near}\subset L$ such that $-L_\mathrm{near}=L_\mathrm{near}$ and a far-field lattice $L_\mathrm{far}=L\setminus L_\mathrm{near}$ such that $\mathrm{dist}_\infty(\bm r, L_\mathrm{far}+2\Omega)>0$ with $\mathrm{dist}_\infty$ the distance in the infinity norm. 

The potential admits the near/far-field decomposition,
\[
    U^{(\bm m)}(\bm r) = \sum_{\bm z \in L_\mathrm{near}}   S^{(\bm m)}(\bm r+\bm z) + \mathcal Z_{L_\mathrm{far}}^{(\bm m)}(\bm r)
\]
with a truncated sum over $S^{(\bm m)}(\bm{r})$,
\[
S^{(\bm m)}(\bm r)=\int_{\Omega} \nabla_{\bm r}^{\bm m} \int_{\Omega+\bm r} \vert \bm r'-\bm r''\vert^{-\nu}\,\mathrm d \bm r'\,\mathrm d \bm r''.
\]
The correction term $\mathcal{Z}_{L_\mathrm{far}}^{(\bm m)}(\bm{r})$ can be expressed in terms of a zeta function acted on by a shape-dependent differential operator $\mathcal D_\Omega$,
\[
\mathcal Z_{L_\mathrm{far}}^{(\bm m)}(\bm r)= \mathcal D_\Omega Z_{L_\mathrm{far},\nu}^{(\bm m)}(\bm r).
\]
For a general compact domain $\Omega$, the operator admits the convergent Taylor representation
\[
\mathcal D_\Omega 
= \sum_{\bm \alpha\ge \bm 0} \frac{1}{\bm \alpha!}\, 
\mathcal I_{\bm \alpha}(\Omega)\,\nabla^{\bm \alpha}, 
\qquad
\mathcal I_{\bm \alpha}(\Omega) 
= \iint_{\Omega\times\Omega}(\bm r-\bm r')^{\bm \alpha}\,\mathrm d \bm r\,\mathrm d \bm r',
\]
where, if $\Omega$ is centrally symmetric, $\Omega=-\Omega$, only multi-indices $\bm \alpha$ with all components even contribute, since otherwise $\mathcal I_{\bm \alpha}(\Omega)=0$.

For the axis-aligned cuboid  $\Omega=\prod_{i=1}^d\left(-c_i/2,c_i/2\right)$,  $c_j>0$, the operator $\mathcal D_\Omega$ takes the explicit form
\[
\mathcal D_\Omega
= \sum_{\bm \alpha \ge \bm 0} 
\frac{2^d}{\bigl(2(\bm \alpha+\bm 1)\bigr)!}\,
\bm c^{\,2(\bm \alpha+\bm 1)}\,\nabla^{2\bm \alpha},
\qquad \bm c=(c_1,\dots,c_d)^T.
\]
with $\bm 1=(1,\ldots,1)^T$.
\end{theorem}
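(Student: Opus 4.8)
The plan is to prove the decomposition first in a range of $\nu$ where every integral, Taylor series, and lattice sum in play converges absolutely, and then to propagate it to all $\nu\in\mathds C$ by meromorphic continuation. The first move is a pair of translations that reduces the whole statement to one about a single periodic image. In the summand of \eqref{U} indexed by $\bm z\in L$, I would shift the outer integration variable by $\bm z$ and the inner one by $\bm r$; neither $\Omega$ nor the operator $\nabla_{\bm r}^{\bm m}$ feels these shifts (both are $\bm r$-independent), and the summand collapses to $S^{(\bm m)}(\bm r-\bm z)$. Since $L=A\mathds Z^n\times\{0\}^{d-n}$ is a lattice and $L_\mathrm{near}=-L_\mathrm{near}$ by hypothesis, $L_\mathrm{far}=L\setminus L_\mathrm{near}$ is again symmetric, so the $\bm z$-sum is insensitive to $\bm z\mapsto-\bm z$ and $U^{(\bm m)}(\bm r)=\sum_{\bm z\in L}S^{(\bm m)}(\bm r+\bm z)$. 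Splitting off the near-field part, the theorem is reduced to the claim $\sum_{\bm z\in L_\mathrm{far}}S^{(\bm m)}(\bm r+\bm z)=\mathcal D_\Omega Z_{L_\mathrm{far},\nu}^{(\bm m)}(\bm r)$.

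The heart of the argument is the analysis of $S^{(\bm m)}$ at a single point $\bm w$ lying far from the lattice. Setting $f(\bm x)=\vert\bm x\vert^{-\nu}$ and performing one more change of variables inside the double integral, $S^{(\bm m)}$ takes the convolution shape $S^{(\bm m)}(\bm w)=\iint_{\Omega\times\Omega}(\nabla^{\bm m}f)(\bm w+\bm r'-\bm r'')\,\mathrm d\bm r'\,\mathrm d\bm r''$ — that is, $\nabla^{\bm m}f$ convolved with the geometric autocorrelation of the indicator of $\Omega$ — which is legitimate for $\bm w=\bm r+\bm z$, $\bm z\in L_\mathrm{far}$, because the hypothesis $\mathrm{dist}_\infty(\bm r,L_\mathrm{far}+2\Omega)>0$ keeps the argument bounded away from the origin. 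Now I would Taylor-expand $\nabla^{\bm m}f$ about $\bm w$: since $f$ is real-analytic on $\mathds R^d\setminus\{\bm0\}$, in the far-field regime the series $\sum_{\bm\alpha\ge\bm0}\frac{1}{\bm\alpha!}(\nabla^{\bm\alpha+\bm m}f)(\bm w)(\bm r'-\bm r'')^{\bm\alpha}$ converges absolutely and uniformly for $\bm r',\bm r''\in\Omega$; interchanging it with the $\Omega\times\Omega$ integral and recognizing $\iint_{\Omega\times\Omega}(\bm r'-\bm r'')^{\bm\alpha}\,\mathrm d\bm r'\,\mathrm d\bm r''=\mathcal I_{\bm\alpha}(\Omega)$ gives $S^{(\bm m)}(\bm w)=\sum_{\bm\alpha\ge\bm0}\frac{\mathcal I_{\bm\alpha}(\Omega)}{\bm\alpha!}\nabla^{\bm\alpha+\bm m}f(\bm w)=\mathcal D_\Omega\nabla^{\bm m}f(\bm w)$. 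The parity statement then comes for free: swapping $\bm r'\leftrightarrow\bm r''$ shows $\mathcal I_{\bm\alpha}(\Omega)=(-1)^{\vert\bm\alpha\vert}\mathcal I_{\bm\alpha}(\Omega)$, and repeating this one coordinate at a time, when $\Omega$ carries the coordinate reflection symmetries (as the cuboid does), annihilates every $\bm\alpha$ with an odd entry.

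It then remains to sum over $\bm z\in L_\mathrm{far}$ and pass to general $\nu$. In a range of $\nu$ where the far-field pieces converge absolutely I may add the per-image identities, exchange the sums over $\bm z$ and over $\bm\alpha$ (a majorant $\sum_{\bm z\in L_\mathrm{far}}\vert\bm r+\bm z\vert^{-\Re{\nu}}$ suffices provided the gap to $L_\mathrm{far}$ comfortably exceeds $\mathrm{diam}(\Omega)$), and pull $\mathcal D_\Omega\nabla^{\bm m}$ outside the lattice sum; using $L_\mathrm{far}=-L_\mathrm{far}$ once more, $\sum_{\bm z\in L_\mathrm{far}}f(\bm r+\bm z)=\sum_{\bm z\in L_\mathrm{far}}\vert\bm z-\bm r\vert^{-\nu}=Z_{L_\mathrm{far},\nu}(\bm r)$, so $\mathcal Z_{L_\mathrm{far}}^{(\bm m)}(\bm r)=\mathcal D_\Omega Z_{L_\mathrm{far},\nu}^{(\bm m)}(\bm r)$ there. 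Both sides are meromorphic in $\nu$ — the left by term-by-term continuation of the $S^{(\bm m)}$, the right because the set zeta function is holomorphic in $\bm r$ on the region dictated by $L_\mathrm{far}$ (Corollary~\ref{cor:holomorphy}) and meromorphic in $\nu$, with $\mathcal D_\Omega$ acting on it through an absolutely convergent series of derivatives — so the identity persists for all $\nu\in\mathds C$. For the cuboid $\Omega=\prod_{i=1}^d(-c_i/2,c_i/2)$ the moment factorizes, $\mathcal I_{\bm\alpha}(\Omega)=\prod_{i=1}^d\int_{-c_i/2}^{c_i/2}\int_{-c_i/2}^{c_i/2}(x-y)^{\alpha_i}\,\mathrm dx\,\mathrm dy$; each one-dimensional factor vanishes for $\alpha_i$ odd, and for $\alpha_i=2\beta_i$ the substitution $w=x-y$, whose fibre has length $c_i-\vert w\vert$, evaluates it to $c_i^{2\beta_i+2}\cdot 2(2\beta_i)!/(2\beta_i+2)!$. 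Feeding $\bm\gamma=2\bm\alpha$ into $\mathcal D_\Omega=\sum_{\bm\gamma\ge\bm0}\mathcal I_{\bm\gamma}(\Omega)\nabla^{\bm\gamma}/\bm\gamma!$ cancels the factor $(2\bm\alpha)!$ and leaves precisely $\mathcal D_\Omega=\sum_{\bm\alpha\ge\bm0}\frac{2^d}{(2(\bm\alpha+\bm1))!}\bm c^{\,2(\bm\alpha+\bm1)}\nabla^{2\bm\alpha}$.

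The routine parts are the two changes of variables and the one-dimensional integral at the end. The real obstacle is the convergence bookkeeping hidden in the middle steps: one must pin down precisely the domain on which the multivariate Taylor expansion of $\vert\bm x\vert^{-\nu}$ converges, so that the triple interchange of Taylor series, $\Omega\times\Omega$ integration, and lattice summation is justified uniformly under exactly the stated hypotheses ($L_\mathrm{near}$ symmetric, $\mathrm{dist}_\infty(\bm r,L_\mathrm{far}+2\Omega)>0$), and then carry the resulting identity through the meromorphic continuation in $\nu$ without losing control of the infinite-order operator $\mathcal D_\Omega$ applied to the continued set zeta function. This is exactly the work done by the technical Theorem~\ref{thm:crandall} and Corollary~\ref{cor:holomorphy}, which I would invoke here rather than rebuild from scratch.
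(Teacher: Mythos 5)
Your proposal is correct and rests on the same two pillars as the paper's proof --- the moment integrals $\mathcal I_{\bm\alpha}(\Omega)$ generated by a Taylor expansion, and the holomorphy/Crandall machinery of Theorem~\ref{thm:crandall} and Corollary~\ref{cor:holomorphy} to control convergence --- but you order the operations differently. The paper first carries out the far-field lattice sum under the $\Omega\times\Omega$ integral, so the object being Taylor-expanded is the already meromorphically continued set zeta function, and the expansion is justified in one stroke by its holomorphy on $D_{L_\mathrm{far}}$; no separate continuation argument in $\nu$ is then needed. You instead expand the kernel $\vert\cdot\vert^{-\nu}$ per image, obtaining $S^{(\bm m)}(\bm r+\bm z)=\mathcal D_\Omega\nabla^{\bm m}f(\bm r+\bm z)$, then sum over $L_\mathrm{far}$, interchange the $\bm z$- and $\bm\alpha$-sums in the absolutely convergent regime $\Re{\nu}>d$, and finally transport the identity to all $\nu$ by the identity theorem --- one extra interchange and one explicit continuation step, both legitimate, with the convergence bookkeeping deferred to exactly the results the paper itself invokes. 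Your tacit strengthening of the distance hypothesis (the gap should ``comfortably exceed'' $\diam(\Omega)$ for the per-image Taylor series and the majorant) mirrors what the paper's own expansion of the zeta function requires and only makes quantitative in Corollary~\ref{cor:error}, so it is not a defect relative to the reference. Two further points in your favor: the initial reduction $U^{(\bm m)}(\bm r)=\sum_{\bm z\in L}S^{(\bm m)}(\bm r+\bm z)$ makes explicit why the symmetry $L_\mathrm{near}=-L_\mathrm{near}$ is needed, which the paper passes over, and your parity argument is sharper than the statement itself: the swap $\bm r'\leftrightarrow\bm r''$ kills odd \emph{total} degree for any $\Omega$, whereas ``all components even'' genuinely requires the coordinate reflection symmetries you invoke (which the cuboid has), not central symmetry alone. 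The cuboid moment computation agrees with the paper's.
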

\begin{proof}
    We begin by removing the sum over $L_\mathrm{near}$. In the remaining sum over $L_\mathrm{far}$, notice that the derivative with respect to $\bm r'$ can be interchanged with the derivative with respect to $\bm r$ due to linearity. By Fubini's theorem, we can exchange the order of integration and differentiation, leading to
    \[
    \mathcal Z_{L_\mathrm{far}}^{(\bm m)}(\bm r)= \iint_{\Omega\times\Omega} \new{Z_{L_\mathrm{far},\nu}^{(\bm m)}(\bm r+\bm r'-\bm r'')\,\mathrm d\bm r'\,\mathrm d \bm r'',}
    \]
    \new{recalling that \[
    Z_{L_\mathrm{far},\nu}(\bm r)=\sideset{}{'}\sum_{\bm z\in L_\mathrm{far}}\frac{1}{\vert \bm z-\bm r \vert^\nu}.
    \]}
    Now, by holomorphy of the set zeta function from Corollary~\ref{cor:holomorphy}, and by the restriction on the distance of $\bm r$ to $L_\mathrm{far}$, the integrand and all its derivatives  can be expanded in uniformly convergent Taylor series within this domain, yielding
    \[
    \mathcal Z_{L_\mathrm{far}}^{(\bm m)}(\bm r)=\iint_{\Omega\times\Omega} \sum_{\bm \alpha\ge \bm 0}\frac{(\new{\bm r'-\bm r''})^{\bm \alpha} }{\bm \alpha!} Z_{L_\mathrm{far},\nu}^{(\bm m+\bm \alpha)}(\new{\bm r})\,\new{\mathrm d\bm r'\,\mathrm d \bm r''}=\sum_{\bm \alpha\ge \bm 0}\frac{\mathcal I_{\bm \alpha}(\Omega)}{\bm \alpha!} Z_{L_\mathrm{far},\nu}^{(\bm m+\bm \alpha)}(\new{\bm r}),
    \]
    where the order of integration and summation was exchanged due to Fubini. For the case of the cuboid, the moment integral factorizes component-wise, and we have
    \[
        \int_{-c_i/2}^{c_i/2} \int_{-c_i/2}^{c_i/2} \new{(r_i'-r_i'')^{\alpha_i}\,\mathrm d r_i'\,\mathrm d r_i''} = \frac{2 c_i^{\alpha_i+2}}{(\alpha_i+1)(\alpha_i+2)},\quad \text{all}~\alpha_i~\text{even},
     \]
     and zero otherwise. Recombining $\mathcal I_{\bm \alpha}(\Omega)$ then yields the specific form of the operator for the cuboid. 
\end{proof}

\new{The results of the preceding theorem apply to arbitrary source and target geometries, including domains with curved boundaries, since the geometry enters exclusively through the moment integral $\mathcal I_{\bm \alpha}(\Omega)$. To illustrate this, we here provide the example of interacting $d$-dimensional balls.}
\new{\begin{example}
By the multinomial theorem, the moment integral can be expressed directly in terms of the moments of $\Omega$,
\[
\mathcal I_{\bm \alpha}(\Omega) = \sum_{\bm \beta\le \bm \alpha} \binom{\bm \alpha}{\bm \beta} (-1)^{-\vert \bm \beta\vert} M_{\bm \beta}(\Omega)M_{\bm \alpha-\bm \beta}(\Omega),\quad M_{\bm \beta}(\Omega) = \int_{\Omega} \bm r^{\bm \beta}\,\mathrm d \bm r.
\]
Consider now $\Omega = B_R$ the $d$-dimensional ball of radius $R$, centered at zero. Then $M_{\bm \beta}(B_R)$ vanishes if $\bm \beta$ includes any odd entries as $-\Omega=\Omega$. The even moment integrals admit the analytic form \cite[6.2.1, 6.2.2]{abramowitz1948handbook}
\[
M_{2\bm{\beta}}(B_R)
= R^{d+2|\bm{\beta}|}
\frac{\prod_{i=1}^d \Gamma\!\left(\beta_i + \tfrac{1}{2}\right)}
{\Gamma\!\left(|\bm{\beta}| + \tfrac{d}{2} + 1\right)},\quad \bm \beta \in \mathds N_0.
\]
\end{example}
}

By enlarging the real space sum, we increase the radius of analyticity of the zeta function $Z_{L_\mathrm{far},\nu}(\bm r)$ around $\bm 0$. 
As a result, the larger the real space sum and the smaller the size of $\Omega$, the fewer derivatives that must be included to reach full precision. 
These derivatives of the zeta function can then be written in terms of a truncated sum plus two superexponentially convergent sums, all of which can be efficiently evaluated. 

The error in truncating the number of derivatives decreases exponentially in the number of \new{derivatives} and algebraically in the components of $\bm c$. The precise error scaling is described in the following corollary.

\begin{corollary}[Error scaling]
\label{cor:error}
Assume that the
conditions of \Cref{thm:sem}
hold. Fix $\eta>1$, such that 
$$\operatorname{dist}_\infty(\bm 0,L_{\rm{far}}+2\Omega)>\eta\;\! d\diam(\Omega),$$ with
$\diam(\Omega)=\sup_{\bm r,\bm r'\in \Omega} \Vert \bm r- \bm r' \Vert_{\infty}$
 the diameter of $\Omega$. For any $\bm r\in \mathds R^d$ such that \[
\|\bm r\|_{\infty}<\eta\;\! d\diam(\Omega),
\]
and $\ell\in\mathds N$, the remainder when truncating to derivatives of order $|\bm\alpha|\le \ell$,
$$
R_\ell
= \sum_{|\bm \alpha|>\ell} \frac{1}{\bm \alpha!}\, 
\mathcal I_{\bm \alpha}(\Omega)\,\nabla^{\bm \alpha} Z_{L_\mathrm{far},\nu}^{(\bm m)}(\bm r),
$$
can be uniformly bounded in $\bm r$ by
\[
|R_\ell| \le  M
\big(\Vol(\Omega)\big)^2
\frac{\eta^{-\ell}}{(\eta-1)}
\]
where
$
M=
\sup\{Z^{(\bm m)}_{L_\mathrm{far},\nu}(\bm z):
\bm z\in\mathds C^d,\ \|\bm z\|_{\infty}<\eta\;\! d\diam(\Omega)\}
$.
\end{corollary}

\begin{proof}
By Corollary~\ref{cor:holomorphy}, $Z_{L_{\rm{far}}}^{(\bm m)}(\bm \cdot)$
is holomorphic on the polydisc
$$
\mathds D=\{\bm z\in\mathds C^d:\|\bm z\|_{\infty}< \rho\}
$$
with radius $\rho = \eta\;\! d\diam(\Omega)$.
From Cauchy's estimate, it follows that
\[
\sup_{\bm z\in \mathds D}|\nabla^{\bm \alpha} Z^{(\bm m)}_{L_\mathrm{far},\nu}(\bm z)|\le M\frac{\bm \alpha!}{\rho^{|\bm \alpha|}}.
\]
Using further that
\[
I_{\bm\alpha}(\Omega)=\iint_{\Omega\times\Omega}(\bm r-\bm r')^{\bm \alpha}\,\mathrm d \bm r\,\mathrm d \bm r'\le (\diam(\Omega))^{|\bm\alpha|}(\Vol(\Omega))^2
\]
we can bound the remainder by
$$
|R_{\ell}|
\le M
(\Vol(\Omega))^2
\sum_{|\bm \alpha|>\ell} 
(\eta d)^{-|\bm\alpha|.
}
$$
After setting $\bm q=\bm 1/(\eta d)$ and thus $(\eta d)^{-\vert \bm \alpha\vert}= \bm q^{\bm \alpha}$, the multinomial theorem yields for $k\in \mathds N$,
$$
\sum_{|\bm \alpha| = k}
\bm q^{\bm\alpha}
\le
\sum_{|\bm \alpha| = k}
\frac{k!}{\bm\alpha!}
\bm q^{\bm\alpha}
=(q_1+\dots+q_d)^k=(d/(\eta d))^{k}=\eta^{-k}.
$$
After evaluating the geometric series,
\[
\sum_{k=\ell+1}^\infty \eta^{-k}=
\frac{\eta^{-\ell}}{(\eta-1)},
\]
we obtain the asserted bound.
\end{proof}

As our second main result, we present a representation of the set zeta function and its derivatives in terms of superexponentially decaying sums. The resulting representation serves as the basis for its efficient computation and can be used to derive the properties of the generalized zeta function. We name this representation Crandall representation due to the close connection to the exponentially convergent representation of the Epstein zeta function derived by Crandall in \cite{crandall2012unified}.  The underlying method can be considered as a generalization of Ewald summation \cite{ewald1921berechnung}.
\begin{theorem}[Crandall representation of set zeta functions]
\label{thm:crandall} Let $L=A\mathds Z^n\times \{0\}^{d-n}$, $n\in \mathds N_+$ with $n\le d$, and $\bm \alpha\in \mathds N^d$. 
For $\bm r\in \mathds R^d$, choose a finite near-field lattice $L_\mathrm{near}\subset L$ and set $L_\mathrm{far}=L\setminus L_\mathrm{near}$. Adopt the notation $\bm r^{\parallel}=(r_1,\dots,r_{n})^T$ and $\bm r^{\perp}=(r_{n+1},\dots,r_{d})^T$ and likewise for $\bm \alpha$. For $\Re{\nu}>n$, derivatives of the zeta function of the far-field lattice then admit the representation
\begin{align*}
Z_{L_\mathrm{far},\nu}^{(\bm \alpha)}(\bm r)=
\frac{(\pi/\lambda^2)^{\nu/2}}{\Gamma(\nu/2)}\bigg[
&\sum_{\bm z\in L_\mathrm{far}}
 \lambda^{-\vert \bm\alpha\vert}
G_\nu^{(\bm \alpha)}\Big(\frac{\bm{z}-\bm{r}}{\lambda}\Big)
-\sum_{\bm z\in L_\mathrm{near}}\lambda^{-\vert \bm\alpha\vert} g_\nu^{(\bm \alpha)}\Big(\frac{\bm{z}-\bm{r}}{\lambda}\Big)
\\ &\quad+ \frac{\lambda^{n}}{V_\Lambda} \sum_{\bm k\in \Lambda^{\ast}} (-2\pi i \bm k)^{\bm \alpha^{\parallel}} e^{-2\pi i \bm k \cdot \bm r^{\parallel}} \lambda^{-\vert \bm \alpha^{\perp}\vert}G_{n-\nu}^{(\bm \alpha^{\perp})}\Big(\lambda \bm k, \frac{\bm r^{\perp}}{\lambda}\Big)\bigg].
\end{align*}
where $\Lambda^{\ast}=A^{-T}\mathds Z^n$. \new{Here,} $\lambda>0$ denotes the Riemann splitting parameter and the derivative always acts with respect to $\bm r$ (or $\bm r^\perp$). Furthermore, we define the upper Crandall function $G_\nu$ and the lower Crandall function $g_\nu$ as
\[
G_\nu(\bm z) = \frac{\Gamma(\nu/2,\pi \bm z^2)}{(\pi \bm z^2)^{\nu/2}}, \qquad g_\nu(\bm z) =\frac{\gamma(\nu/2,\pi \bm z^2)}{(\pi \bm z^2)^{\nu/2}},
\]
with $\Gamma(a,z)$ the upper and $\gamma(a,z)$ the lower incomplete Gamma function.
Here $g_{\nu}(\bm z)$ is continuously extended to $\bm z=\bm 0$ and we define $G_{\nu}(\bm 0)=-\nu/2$. Finally, $G_{\nu}(\bm k,\bm r)$ is an incomplete Bessel function, 
\[
G_{\nu}(\bm k,\bm r)=2\int_0^1 t^{-\nu} e^{-\pi \bm k^2/t^2} e^{-\pi \bm r^2 t^2}\, \frac{\mathrm d t}{t}.
\]
The representation forms the meromorphic continuation to $\nu\in\mathds C$.
\end{theorem}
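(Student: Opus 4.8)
The plan is to run an Ewald-type splitting built on the Schwinger integral representation of the power law. For $\Re\nu$ large one writes $|\bm x|^{-\nu}=\tfrac{(\pi/\lambda^2)^{\nu/2}}{\Gamma(\nu/2)}\int_0^\infty u^{\nu/2-1}e^{-\pi u\,|\bm x/\lambda|^2}\,\mathrm d u$, which follows from $\int_0^\infty t^{s-1}e^{-at}\,\mathrm d t=\Gamma(s)a^{-s}$ after the substitution $t=\pi u/\lambda^2$ and already fixes the prefactor in the statement. Inserting this into every summand of $Z_{L_\mathrm{far},\nu}(\bm r)=\sideset{}{'}\sum_{\bm z\in L_\mathrm{far}}|\bm z-\bm r|^{-\nu}$ and splitting the $u$-integral at the Riemann parameter, $\int_0^\infty=\int_1^\infty+\int_0^1$, the tail $\int_1^\infty$ is, by the definition of the upper incomplete Gamma function, precisely $\sum_{\bm z\in L_\mathrm{far}}G_\nu\big((\bm z-\bm r)/\lambda\big)$; this sum converges superexponentially because $G_\nu(\bm w)$ carries a factor $e^{-\pi|\bm w|^2}$ as $|\bm w|\to\infty$, so no interchange with the $\bm z$-sum is needed here. (Degenerate configurations such as $\bm r\in L$ are absorbed into the stated origin values of $G_\nu$ and $g_\nu$.)

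The core is the piece $\int_0^1$. First add and subtract the finite near-field sublattice, $\sum_{\bm z\in L_\mathrm{far}}=\sum_{\bm z\in L}-\sum_{\bm z\in L_\mathrm{near}}$; the near-field contribution is immediately $-\sum_{\bm z\in L_\mathrm{near}}g_\nu\big((\bm z-\bm r)/\lambda\big)$ through the lower incomplete Gamma function, the continuous extension of $g_\nu$ at the origin covering $\bm r\in L_\mathrm{near}$. For the remaining full-lattice piece I interchange the $\bm z$-sum with $\int_0^1$ — legitimate for $\Re\nu>n$ by Tonelli, since the theta sum $\sum_{\bm z\in L}e^{-\pi u|\bm z-\bm r|^2/\lambda^2}$ grows only like $u^{-n/2}$ as $u\to0^+$, and $\Re\nu>d\ge n$ on the domain of absolute convergence — then write $|\bm z-\bm r|^2=|A\bm j-\bm r^{\parallel}|^2+|\bm r^{\perp}|^2$ using $L=A\mathds Z^n\times\{\bm 0\}^{d-n}$ and apply Poisson summation in the $n$ \emph{parallel} directions only. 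A Gaussian having a Gaussian transform, this gives $\sum_{\bm j\in\mathds Z^n}e^{-\pi(u/\lambda^2)|A\bm j-\bm r^{\parallel}|^2}=\tfrac{1}{V_\Lambda}(\lambda^2/u)^{n/2}\sum_{\bm k\in\Lambda^{\ast}}e^{2\pi i\bm k\cdot\bm r^{\parallel}}e^{-\pi\lambda^2|\bm k|^2/u}$, the sign of the phase being irrelevant since $\Lambda^{\ast}=-\Lambda^{\ast}$. Carrying the factors $(\lambda^2/u)^{n/2}=\lambda^n u^{-n/2}$ and the surviving perpendicular factor $e^{-\pi(u/\lambda^2)|\bm r^{\perp}|^2}$ back into the $u$-integral and substituting $u=t^2$ turns $\int_0^1 u^{\nu/2-1-n/2}e^{-\pi\lambda^2|\bm k|^2/u}e^{-\pi(u/\lambda^2)|\bm r^{\perp}|^2}\,\mathrm d u$ into $2\int_0^1 t^{-(n-\nu)}e^{-\pi(\lambda\bm k)^2/t^2}e^{-\pi(\bm r^{\perp}/\lambda)^2 t^2}\,\tfrac{\mathrm d t}{t}=G_{n-\nu}(\lambda\bm k,\bm r^{\perp}/\lambda)$, exactly the incomplete Bessel function. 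Recombining the three contributions, together with the common prefactor and the stray $\lambda^n$, reproduces the identity for $\bm\alpha=\bm 0$.

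For general $\bm\alpha$ I differentiate this identity termwise in $\bm r$. This is justified because each of the three series, and all of its $\bm r$-derivatives, converges locally uniformly in $(\nu,\bm r)$: the direct sums because $G_\nu,g_\nu$ and their derivatives inherit the factor $e^{-\pi|\bm w|^2}$, the reciprocal sum because $G_{n-\nu}(\lambda\bm k,\cdot\,)$ and its derivatives decay like $e^{-\pi\lambda^2|\bm k|^2}$. The chain rule then produces the factors $\lambda^{-|\bm\alpha|}$ on the two direct sums (the accompanying signs being absorbed into the superscript convention of the statement) and, on the reciprocal sum, the monomial $(2\pi i\bm k)^{\bm\alpha^{\parallel}}$ from differentiating $e^{2\pi i\bm k\cdot\bm r^{\parallel}}$ together with $\lambda^{-|\bm\alpha^{\perp}|}$ from differentiating the perpendicular slot of $G_{n-\nu}$, using that $\nabla_{\bm r}^{\bm\alpha}$ factors over the parallel and perpendicular blocks. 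The identity has now been established on $\{\Re\nu>d\}$; since its right-hand side is meromorphic in $\nu$ on all of $\mathds C$ — the far-field $G_\nu$-sum and the $\bm k\ne\bm 0$ part of the reciprocal sum being entire in $\nu$, while the poles of the finite near-field $g_\nu$-sum and of the $\bm k=\bm 0$ reciprocal term at the non-positive even integers are cancelled by the zeros of $1/\Gamma(\nu/2)$ — it furnishes, by uniqueness of analytic continuation, the asserted meromorphic continuation of $Z_{L_\mathrm{far},\nu}^{(\bm\alpha)}(\bm r)$.

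I expect the Poisson step to be the main obstacle. Because the lattice lives in a proper $n$-dimensional subspace of $\mathds R^d$, one must Poisson-sum over the parallel directions only while transporting the perpendicular Gaussian unchanged through the Mellin-type $u$-integral; keeping track of the powers of $\lambda$, $u$ and $t$, the Jacobian, and the single factor $2$, so that the $u$-integral collapses \emph{exactly} onto $G_{n-\nu}$ rather than onto some near relative, is the delicate bookkeeping at the heart of the theorem. Secondary care is needed to justify all interchanges of summation, integration and differentiation uniformly, and to check in the continuation step that the spurious poles at $\nu\in\{0,-2,-4,\dots\}$ arising from the origin values of $g_\nu$ and from the $\bm k=\bm 0$ reciprocal term genuinely cancel against the $\Gamma$-prefactor — a verification that must be carried out with the correct continued values of the underlying scaleless integrals.
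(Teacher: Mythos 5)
Your proposal is correct and follows essentially the same route as the paper's proof: a Riemann/Ewald splitting of each summand into upper and lower Crandall functions (which you derive directly from the Schwinger integral rather than citing the fundamental relation of the earlier Epstein-zeta work), separation of the finite near-field sum, Poisson summation in the $n$ parallel lattice directions only so that the $u$-integral collapses onto the incomplete Bessel function $G_{n-\nu}(\lambda\bm k,\bm r^{\perp}/\lambda)$, and termwise differentiation to obtain the case of general $\bm\alpha$. The remaining discrepancies you flag (sign convention in the superscript notation, continuation in $\nu$) are points the paper's own proof also treats implicitly, so no genuine gap remains.
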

\begin{proof}
We follow the proof of \cite[Th. 2.11]{buchheit2024epstein}
and begin by deriving the Crandall representation for $\bm\alpha=\bm 0$.
Let $\nu>d$, so 
the defining lattice sum of the set zeta function converges absolutely by  
\cite[Lemma A.1.]{buchheit2024epstein}.
Note that the lattice sum is no longer primed, as  \(\bm r\notin L_{\rm{far}}\).
By the fundamental relation for the Crandall functions
\cite[Lemma 2.9 (2)]{buchheit2024epstein} 
$$
( \bm z^2 )^{-\nu/2}\frac{\Gamma(\nu/2)}{(\pi/\lambda^2)^{\nu/2}} = G_\nu(\bm z/\lambda)+g_\nu(\bm z/\lambda)
$$
we may separate the set zeta function
into the upper and lower Crandall functions
with a Riemann splitting parameter $\lambda>0$. Further separating the sum involving the lower Crandall functions into a sum over the complete set $L$ minus a near-field sum yields
$$
Z_{L_{\rm{far}},\nu}(\bm r)\frac{\Gamma(\nu/2)}{(\pi/\lambda^2)^{\nu/2}} 
=
\sum_{\bm z \in L_{\rm{far}}} G_\nu\Big(\frac{\bm{z}-\bm{r}}{\lambda}\Big)
-\sum_{\bm z \in L_{\rm{near}}} g_\nu\Big(\frac{\bm{z}-\bm{r}}{\lambda}\Big)+
\sum_{\bm z \in L} g_\nu\Big(\frac{\bm{z}-\bm{r}}{\lambda}\Big).
$$
For the sum over the whole set $L=\Lambda\times \{0\}^{d-n}$, $\Lambda=A\mathds Z^n$,
we obtain
$$
\sum_{\bm z \in L} g_\nu\Big(\frac{\bm{z}-\bm{r}}{\lambda}\Big)
=
2\int_0^{1} \new{t^{-(\nu+1)}}
\Big(\sum_{{\bm z}^\parallel\in\Lambda}
e^{-\pi ({\bm z}^\parallel-\bm r^{\parallel})^2t^2/\lambda^2} \Big)e^{-\pi (\bm r^{\perp})^2 t^2/\lambda^2}\,\new{\mathrm d t},
$$
by the integral representation for the lower Crandall function in the proof of
\cite[Lem. 2.10]{buchheit2025computation},
where we exchanged the sum and the integral by dominant convergence for $\Re{\nu}>n$.
After applying Poisson summation 
\cite[Lem. 2.6]{buchheit2024epstein}
to the sum over $\Lambda$ with the Fourier transform
$$
\mathcal F\big(
e^{-\pi (\bm \cdot-\bm r^{\parallel})^2t^2/\lambda^2}\big)(\bm k)
=
\lambda^n t^{-n}e^{-2\pi i\bm k\cdot\bm r^{\parallel}}e^{-\pi \bm k^2 \lambda^2/t^2},
$$
we obtain
$$
\sum_{\bm z \in L} g_\nu\Big(\frac{\bm{z}-\bm{r}}{\lambda}\Big)
=
\frac{\lambda^{n}}{V_\Lambda} \sum_{\bm k\in \Lambda^{\ast}} e^{-2\pi i \bm k \cdot \bm r^{\parallel}}G_{n-\nu}\Big(\lambda \bm k, \frac{\bm r^{\perp}}{\lambda}\Big).
$$
Applying the derivative to the reassembled formula yields the statement.
\end{proof}

\new{At this point we briefly place the above representation in context with the existing literature on Ewald methods. First, Ewald-type representations equivalent to the standard Crandall formula for full point lattices and power-law potentials $|\bm{\cdot}|^{-\nu}$ are known, see, for instance, \cite[Eq.~226]{mazars2011long} for the case of a full three-dimensional lattice and references therein. The connection to the Crandall representation is obtained by selecting the Ewald cutoff in terms of incomplete Gamma functions, which can be shown to be equivalent to the splitting originally introduced by Riemann. One must be careful, however, because the resulting formulas are often not fully general, being restricted to special cases of $\nu$, $\Lambda$, or $d$. 
We also note important progress for quasi two-dimensional lattice sums in \cite[Eq.~234]{mazars2011long} and \cite[Eq.~28]{mazars2010ewald}, which employ incomplete Bessel functions, but are restricted to $\Lambda=\mathds{Z}^2$. Theorem~\ref{thm:crandall} generalizes these special cases to arbitrary $d$-dimensional lattices, complex exponents $\nu \in \mathds{C}$, and an arbitrary split into $n$ periodic dimensions and $d-n$ open-boundary dimensions. In addition, the representation allows for the removal of a finite number of singularities in real space without introducing cancellation errors. It enables the evaluation of higher-order derivatives, computable to high precision via Lemma~\ref{der-crandall}, where the appearing incomplete Bessel functions become efficiently computable as well by Algorithm~\ref{alg}. This provides the foundation for the zeta expansion, which uses derivatives of the regularized zeta function to efficiently handle arbitrary source and target geometries.
}

The holomorphic extension of the set zeta function, and thus its radius of analyticity, readily follows from the Crandall representation.
\begin{corollary}
\label{cor:holomorphy}
    Let the conditions of Theorem~\ref{thm:crandall} hold and define the set $D_{L_\mathrm{far}}\subseteq \mathds C^d$ as an intersection of $d$-dimensional complex cones centered at the point set $L_\mathrm{far}\subseteq \mathds R^d$,
$$
D_{L_\mathrm{far}}=\{\bm u\in\mathds C^d:|\Re{\bm u}-\bm z|>|\Im{\bm u}|\ \forall \bm z\in L_\mathrm{far}\}.
$$
Then $Z_{L_\mathrm{far},\nu}$ extends to a holomorphic function on $D_{L_\mathrm{far}}$.
\end{corollary}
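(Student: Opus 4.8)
The plan is to argue term-by-term from the Crandall representation in Theorem~\ref{thm:crandall} with $\bm\alpha=\bm 0$, showing that each of the three constituent sums extends to a holomorphic function on the stated domain $D_{L_\mathrm{far}}$, and that the extensions converge locally uniformly there. Since a locally uniform limit of holomorphic functions is holomorphic (Weierstrass), and since the Crandall representation agrees with $Z_{L_\mathrm{far},\nu}$ on the real slice $\{\bm u\in\mathds R^d\}\cap D_{L_\mathrm{far}}$ where $\Re\nu>d$, the identity theorem then pins down the extension uniquely. The key observation is the identity $\bm z^2 = z_1^2+\dots+z_d^2$, extended to $\bm u\in\mathds C^d$ as a polynomial, and the elementary bound $\Re\big((\bm u-\bm z)^2\big) = |\Re\bm u-\bm z|^2 - |\Im\bm u|^2 > 0$ for $\bm u\in D_{L_\mathrm{far}}$ and $\bm z\in L_\mathrm{far}$; this is exactly what makes the cone condition the natural domain.

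First I would treat the upper Crandall sum $\sum_{\bm z\in L_\mathrm{far}} G_\nu\big((\bm z-\bm u)/\lambda\big)$. Writing $G_\nu(\bm w) = \Gamma(\nu/2,\pi\bm w^2)/(\pi\bm w^2)^{\nu/2}$, I note that $w\mapsto \Gamma(\nu/2,\pi w)/(\pi w)^{\nu/2}$ is an entire function of $w$ (the apparent singularity at $w=0$ is removable, being the power series $\sum_{k\ge 0}(-\pi)^k/(k!(\nu/2+k))$ up to constants), so that $G_\nu\big((\bm z-\bm u)/\lambda\big)$ is holomorphic in $\bm u$ on all of $\mathds C^d$ for each fixed $\bm z$. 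For locally uniform convergence of the sum on $D_{L_\mathrm{far}}$, I would use the decay of the incomplete Gamma function: $\Gamma(\nu/2,\zeta)/\zeta^{\nu/2}$ decays like $e^{-\zeta}\zeta^{-1}$ for $\Re\zeta\to+\infty$, and on a compact subset $K$ of $D_{L_\mathrm{far}}$ there is $\delta>0$ with $\Re\big((\bm z-\bm u)^2\big)\ge \delta|\bm z|^2$ for all large $|\bm z|$, uniformly in $\bm u\in K$; this forces Gaussian-in-$|\bm z|$ decay of the summands, hence absolute and uniform convergence. The near-field sum $\sum_{\bm z\in L_\mathrm{near}} g_\nu\big((\bm z-\bm u)/\lambda\big)$ is a \emph{finite} sum of entire functions of $\bm u$ (again $g_\nu$ extends entirely via the lower incomplete Gamma power series), so holomorphy is immediate with no convergence issue.

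For the reciprocal-lattice sum $\sum_{\bm k\in\Lambda^\ast} e^{2\pi i\bm k\cdot\bm u^{\parallel}} G_{n-\nu}\big(\lambda\bm k,\bm u^{\perp}/\lambda\big)$, I would use the integral representation $G_{n-\nu}(\bm k,\bm r)=2\int_0^1 t^{\nu-n}e^{-\pi\bm k^2/t^2}e^{-\pi\bm r^2 t^2}\,\mathrm dt/t$; differentiating under the integral (justified by dominated convergence, the $e^{-\pi\bm k^2/t^2}$ factor killing the endpoint $t\to 0$) shows each term is entire in $\bm u$. For convergence: the factor $e^{2\pi i\bm k\cdot\bm u^{\parallel}}$ grows at most like $e^{2\pi|\bm k||\Im\bm u^{\parallel}|}$ on $K$, while $e^{-\pi\bm k^2/t^2}$ with $t\le 1$ contributes $e^{-\pi|\bm k|^2}$; since the real slices $\bm r$ lie in $D_{L_\mathrm{far}}$ with $\|\bm u\|_\infty$ bounded by the near-field radius (the cone condition tied to $L_\mathrm{far}$), the Gaussian in $|\bm k|$ dominates the exponential growth, giving locally uniform convergence. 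Note the $\bm u^{\perp}$ dependence enters only through the harmless $e^{-\pi(\bm u^\perp)^2 t^2}$ with $t\le 1$.

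The main obstacle I anticipate is the bookkeeping for the cone domain: verifying that the estimate $\Re\big((\bm z-\bm u)^2\big)>0$ upgrades to a \emph{uniform} lower bound $\ge \delta|\bm z|^2$ on compacta of $D_{L_\mathrm{far}}$ for all but finitely many $\bm z\in L_\mathrm{far}$, and separately handling those finitely many $\bm z$ (where $\Re\big((\bm z-\bm u)^2\big)$ stays bounded away from $0$ and $-\infty$ by compactness, so $G_\nu$ is just bounded). A minor subtlety is that $G_\nu$ and $g_\nu$ are holomorphic in the variable $\bm w^2$, not literally in $\bm w$, so one must confirm the removable-singularity claim at $\bm w=\bm 0$ rather than invoke it for $G_\nu$ with a branch cut — but for the near-field $g_\nu$ this is exactly the continuous (indeed entire) extension already stipulated in Theorem~\ref{thm:crandall}, and for the far-field $G_\nu$ the argument $(\bm z-\bm u)/\lambda$ never vanishes on $D_{L_\mathrm{far}}$ so no extension is even needed there.
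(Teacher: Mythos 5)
Your overall strategy coincides with the paper's: both read holomorphy off the Crandall representation of Theorem~\ref{thm:crandall} and conclude by locally uniform convergence of holomorphic summands (the paper delegates the real-space part to the previously proven holomorphy of $G_\nu$, $g_\nu$ and of the Epstein zeta function on the cone, and controls the reciprocal sum via the bound $|G_{n-\nu}(\bm k,\bm r^\perp)|\le |G_{n-\nu}(\bm k)|$ together with entirety in $\bm r^\perp$). However, your first paragraph contains a genuine error: $G_\nu$ is \emph{not} entire in the squared argument. The power series $\sum_{k\ge 0}(-\pi)^k/(k!(\nu/2+k))$ you quote is the expansion of the \emph{lower} incomplete gamma function, i.e.\ it shows that $g_\nu$ is entire; for the upper function one has $\Gamma(\nu/2,\zeta)\,\zeta^{-\nu/2}=\Gamma(\nu/2)\,\zeta^{-\nu/2}-\gamma(\nu/2,\zeta)\,\zeta^{-\nu/2}$, and the first term has a branch point at $\zeta=0$. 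Hence $G_\nu\bigl((\bm z-\bm u)/\lambda\bigr)$ is not holomorphic on all of $\mathds C^d$, and the repair you offer in your last paragraph --- that the vector argument never vanishes on $D_{L_\mathrm{far}}$ --- is not the right condition either: for complex $\bm w\neq \bm 0$ the quadratic form $\bm w^2$ can still hit the cut $(-\infty,0]$ (e.g.\ $\bm w=(1,i)$ gives $\bm w^2=0$). What actually saves the argument is precisely the inequality you state at the outset, $\Re{(\bm u-\bm z)^2}=|\Re{\bm u}-\bm z|^2-|\Im{\bm u}|^2>0$ on $D_{L_\mathrm{far}}$: it keeps $\pi(\bm z-\bm u)^2/\lambda^2$ in the open right half-plane, where $\Gamma(\nu/2,\cdot)$ and the principal power are holomorphic. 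This is exactly the content of the cone-holomorphy lemma for $G_\nu$ that the paper cites, so the gap is reparable, but as written your term-by-term holomorphy claim for the far-field sum is false.

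A second, smaller omission: in the reciprocal sum, your justification for holomorphy of each term (differentiation under the integral, with $e^{-\pi\bm k^2/t^2}$ taming the endpoint $t\to 0$) fails for the $\bm k=\bm 0$ term, which must be treated separately; as in the paper, it reduces to a lower Crandall function, which is entire, so the conclusion survives. With these two points corrected, your Gaussian-domination estimates for both infinite sums and the finite near-field sum are sound and reproduce, in a self-contained way, what the paper obtains largely by citation.
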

\begin{proof}
    For ${\bm r}^\perp=\bm 0$, the function corresponds to an Epstein zeta function minus a finite sum over $L_\mathrm{near}$. Here, the holomorphy of $G_{\nu}(\bm \cdot)$ on
    $$D=\{\bm u\in\mathds C^d:|\Re{\bm u}|>|\Im{\bm u}|\}$$ and for $g_{\nu}(\bm \cdot)$ on $\mathds C^d$ has been shown in 
    \cite[Lemma 2.9]{buchheitComputationPropertiesEpstein2024} and the resulting holomorphy of the Epstein zeta function has been proven in \cite[Theorem 2.13]{buchheitComputationPropertiesEpstein2024}. This result then directly applies to the set zeta function. 
    
    For ${\bm r}^\perp \neq \bm 0$, notice from the integral representation of the incomplete Crandall function that for $\bm k\neq \bm  0$, $G_{n-\nu}(\bm k,{\bm r}^\perp)$ is entire in ${\bm r}^\perp$ with the bound.
    \[
    |G_{n-\nu}(\bm k,{\bm r}^\perp)|<|G_{n-\nu}(\bm k)|.
    \]
    For $\bm k=\bm 0$, the function reduces to a lower Crandall function, which is entire too. Thus, the set zeta function is holomorphic on $D_{L_\mathrm{near}}$ as the \new{compactly uniform limit} of holomorphic functions. 
\end{proof}

As the Crandall functions (and their derivatives) exhibit the asymptotic scaling
\[
|G_\nu(\bm k,\bm r)|\le |G_\nu(\bm r)|\sim \frac{e^{-\pi \bm r^2}}{\pi \bm r^2}, 
\]
see \cite{buchheit2025computation,buchheit2024epstein}, both sums fall off at a superexponential rate, building the foundation for an efficient computation of the zeta function. 

Note that all derivatives of the Crandall functions can be expressed analytically in terms of other Crandall functions with shifted exponents, as discussed in Sec.~\ref{sec:generalized_zeta_computation}. The complete algorithm for computing the set zeta function for arbitrary parameter choices is also presented there. Combining these results, Theorem~\ref{thm:sem} provides the exact correction of lattice sums due to truncation in terms of derivatives of generalized zeta functions, while Theorem~\ref{thm:crandall} establishes a superexponentially convergent representation of the required zeta functions and their derivatives. Together with the error scaling in Corollary~\ref{cor:error}, these results enable the efficient computation of the potential function and all its derivatives needed in micromagnetics.

\section{Numerical results}
\label{sec:num_results}

\subsection{Comparison against direct summation}

\begin{figure} 
    \centering
\includegraphics[width=.65\linewidth]{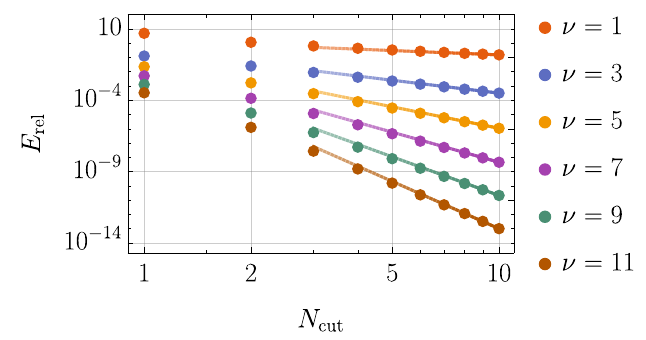}
    \caption{Relative error between the three-dimensional potential 
    $U^{(0,0,2)}(\bm r)$
    centered at $\bm r=(1,1,1)/2$ 
    for the lattice at $\mathds Z^2\times\{0\}$
    as obtained by the zeta representation in \Cref{thm:sem}
    in comparison with direct summation over the truncated grid 
    $\{-N_{\rm{cut}},\ldots,N_{\rm{cut}}\}^2\times\{0\}$ as dots a function of the 
    the truncation length integers
    $1\le N_{\rm{cut}}\le 10$.
    The error scaling $C_\nu N_{\rm{cut}}^{-\nu}$ for some fitted parameter $C_\nu$ is shown as lines of the in the same color as the corresponding the error dots.
    }
\label{fig:sum}
\end{figure}

Establishing reliable analytic benchmarks for non-trivial infinite lattice sums is a challenging task, as exact formulas are scarce, see, e.g., \cite{borwein2013lattice} and references therein. In the following, we use the fact that our method admits an arbitrary exponent $\nu$, and compare our method for sufficiently large $\nu$ with direct summation, which still yields high-precision reference values within reasonable times. 

To this end, we study the interaction potential
$U^{(\bm m)}(\bm r)$ for the cuboid domain
$\Omega=\prod_{i=1}^3[-c_i/2,c_i/2]$ centered at $\bm r$
for some $\bm c\in\mathds R^3_+$
with an infinite 2D array of copies centered at $L=\mathds Z^2\times \{0\}$.
Let $E_{\rm{rel}}$ denote the relative error
of our algorithm in comparison with high-precision values obtained by 
direct summation over the truncated lattice
$\{-N_{\rm{cut}},\ldots,N_{\rm{cut}}\}^2\times\{0\}$
for some $N_{\rm{cut}}\in\mathds N_+$.
Furthermore, we choose 
$L_{\rm{near}}=\{-1,0,1\}^2\times\{0\}$
in the near/far-field decomposition in 
\Cref{thm:sem} for the zeta treatment of the potential.
For benchmarking our implementation of the potential,
we take $\nu\ge 10$ and $N_{\rm{cut}}=14$.
We compare the potential obtained by adding the zeta-correction with up to fourth-order derivatives
to the $9$ summands in the grid $L_{\rm{near}}=\{-1,0,1\}^2\times\{0\}$ with the potential obtained by summing over
the truncated lattice
consisting of $841$ summands.
In \Cref{fig:directsum}
we show the error
for fixed 
second-order derivatives
$$\bm m\in\{(1,1,0),(1,0,1),(0,0,2)\}$$ 
and geometries
$$
\bm c\in\{(1,1,1)/50,(1,1,1)/100,(1,2,3)/100\}.
$$
over a grid of values 
$\bm r
=(1/4,1/4,r)
$, $r\in\{1/10,2/10,\ldots,5/10\}$ and $\nu\in [10,15]$ in increments of $1/10$. 
where the direct sum converges 
at least proportional to $N_{\rm{cut}}^{-10}$.
We observe, that our algorithm reaches full precision over the whole parameter range. As the error of our method does not exhibit a relevant scaling with $\nu$, see Corollary~\ref{cor:error}, a corresponding precision is to be expected for the whole $\nu$ range, which is confirmed by another analytic benchmark in the next section. 

 Assuming on this basis that the value of the zeta method is numerically exact, we now study the convergence of the direct sum against this reference for $\bm m=(0,0,2)^T$ and small values of $\nu$ as a function of the truncation $N_\mathrm{cut}$ in  \Cref{fig:sum}. We find that the relative error scales as $N_{\mathrm{cut}}^{-\nu}$, leading to slow convergence for small values of $\nu$. For the case of micromagnetics, $\nu=1$, and a typically used value of $N_\mathrm{cut}=10$, the relative error compared to the zeta method still exceeds $15\,\%$.

\begin{table}
\renewcommand{\arraystretch}{1.3}
\centering
\begin{tabular}{ccc}
    \toprule
    derivative vector $\bm m$
    & cuboid geometry $\bm c$ 
    & \(\max_{\bm r,\nu}E_{\mathrm{rel}}\) 
  \\

\midrule
\multirow{3}{*}{$(1,1,0)$}
& $(1,1,1)/50$
& $2.16 \cdot 10^{-16}$ 
\\

& $(1,1,1)/100$
& $2.12 \cdot 10^{-16}$ 
\\

& $(1,2,3)/100$
& $2.20 \cdot 10^{-16}$ 
\\

\midrule
\multirow{3}{*}{$(1,0,1)$}
& $(1,1,1)/50$
& $2.18 \cdot 10^{-16}$ 
\\

& $(1,1,1)/100$
& $2.11 \cdot 10^{-16}$ 
\\

& $(1,2,3)/100$
& $2.16 \cdot 10^{-16}$ 
\\

\midrule
\multirow{3}{*}{$(0,0,2)$}
& $(1,1,1)/50$
& $3.87 \cdot 10^{-16}$ 
\\

& $(1,1,1)/100$
& $2.14 \cdot 10^{-16}$ 
\\

& $(1,2,3)/100$
& $2.19 \cdot 10^{-16}$ 
\\
    \bottomrule
\end{tabular}
\caption{Relative error 
of the potential $U^{(\bm m)}(\bm r)$ of a cuboid
cuboid $\Omega=\prod_{i=1}^3[-c_i/2,c_i/2]$ centered at $\bm r$ and its images centered at lattice points $L=\mathds Z^2\times\{0\}$ for different second-order derivatives $\bm m$ and geometries $\bm c$. 
The value computed by our method with 
$N_{\rm cut}=1$ including up to fourth-order derivatives in the zeta correction term, is compared to direct summation with $N_{\rm cut}=14$ at sufficiently large values of $\nu$, where a reliable reference can still be obtained within reasonable times.
Each error is the maximum error obtained 
over a grid of values $\bm r=(1/4,1/4,r)$ for $1/10\le r\le 1/2$ and $10\le\nu\le 14$
in increments of $\Delta r=\Delta\nu=1/10$.
}
\label{fig:directsum}
\end{table}

\subsection{Corrections to asymptotic behavior of the demagnetization field}

As a second benchmark, we compare our numerical method against analytic expressions for the leading asymptotic behavior of the demagnetization factor.   To this end, we consider a three-dimensional cuboid $\Omega=[-c/2,c/2]^3$, centered at $\bm r= \bm 0$, and its infinite repetitions along the lattice $\Lambda = \mathds Z^2$, with the cuboid being magnetized along the $z$-axis. Dividing the elementary lattice cell into $N$ cuboids per dimension yields $c=1/N$, see \Cref{fig:demag}. 

\begin{figure}[h!]
    \centering
    \includegraphics[width=0.5\linewidth]{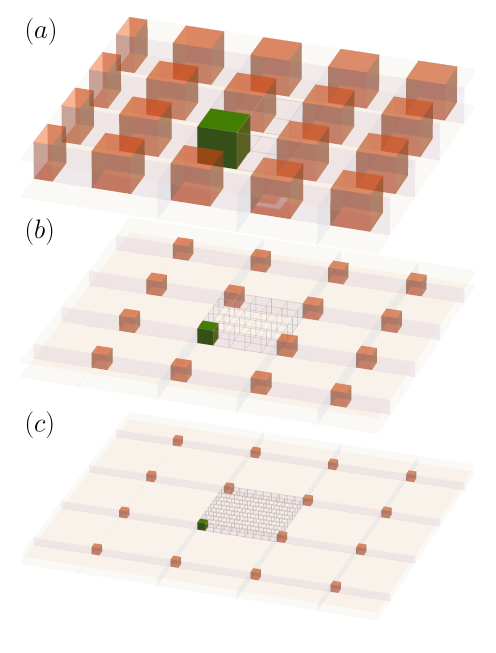}
    \caption{Cuboid 
    $\Omega=[-1/(2N),1/(2N)]^3$ and its infinite repetitions $\bm z+\Omega$
    along the two-dimensional lattice embedded in three dimensions
    $L=\mathds Z^2\times \{0\}$
    for $N=2$ (a), $N=5$ (b) and $N=10$ (c).
    }
    \label{fig:demag}
\end{figure}

The demagnetization factor $D_z$ then reads, 
\begin{align*}
D_z = 1 + \frac{1}{4\pi}\frac{1}{c^3}
\left(
U^{(2,0,0)}(\bm 0) + U^{(0,2,0)}(\bm 0)
\right),
\end{align*}
and thus reflects both the influence of the cuboid's own field (see \ref{app:cuboids_classical}) and the impact of an infinite number of neighbors.
We provide the values of $D_z$ beyond machine precision 
in 
\Cref{tab:dz}
.
One useful observation is that for $N=1$, the system develops into a continuous planar layer (of thickness~$c$), and therefore the demagnetization factor is trivial: 
\begin{align*}
D_z = 1 \quad (N=1).
\label{Dzleft}
\end{align*}
On the other hand, for $N\rightarrow\infty$, the cuboid is small compared to the lattice spacing, such that it interacts with its repetitions as a point dipole. The magnetic field contribution from the copies is expressed through a lattice sum, where the leading asymptotic corrections due to finite size of the cuboid can be analytically determined ~\cite{Borwein1987}, yielding 
\begin{align*}
D_z^{\mathrm{asym}}  = \frac{1}{3} + 
\frac{Z_{\mathds Z^2,3}(\bm 0)}{4\pi}\ \frac{1}{N^3},
\end{align*}
with the particular value of the Epstein zeta function \cite[Table 1, Sum $S(0,2)$]{zucker2017exact}
\[
Z_{\mathds Z^2,3}(\bm 0)
=
4\beta(3/2)\zeta(3/2),
\]
and where $\beta$ denotes the Dirichlet beta function.

In Fig.~\ref{fig:Dz_deviation}, we display the absolute difference between the demagnetization factor, obtained from the zeta expansion, and the known asymptotic formula as a function of $N$. We reproduce the known asymptotic behavior, providing another validation for our approach. In addition, we obtain the next order asymptotic correction,
\[
D_z=D_z^{\mathrm{asym}}- 
\alpha\ \frac{1}{N^7},\quad N\to \infty,
\]
where  $\alpha\approx 0.1441459732$ has been obtained from a fit. This correction results from the $\vert \bm \alpha\vert=1$ terms in the zeta expansion in Theorem~\ref{thm:sem}.

To serve as a reliable benchmark for future implementations of our algorithm in micromagnetics software packages, we provide multi-precision values of the demagnetization factor for various values of $N$ in Table~\ref{tab:dz}.

\begin{table}
\centering
\begin{tabular}{cc}
    \toprule
    $N$ 
    & \(D_z\) 
  \\

	\midrule
	  $2$
	& $0.4222\,0496\,3454\,0001\,7334\,0215\,3861\,9290$
	\\

	\midrule
	  $5$
	& $0.3390\,8248\,7690\,9804\,5966\,0589\,1398\,9491$
	\\

	\midrule
	  $10$
	& $0.3340\,5219\,1717\,7249\,3459\,0417\,7785\,8568$
	\\

	\midrule
	  $50$
	& $0.3333\,3908\,4315\,3283\,8944\,2139\,2339\,3160$
	\\

	\midrule
	  $100$
	& $0.3333\,3405\,2206\,1043\,3571\,9460\,7573\,5189$
	\\

    \bottomrule
\end{tabular}
\caption{
High-precision evaluation of
the demagnetization factor $D_z$ for different values of 
$N\in\{2,5,10,50,100\}$
for $32$ digits of precision.
For $N=2$ we chose $L_{\rm{near}}=\{-26,\ldots,-26\}^2\times\{0\}$
and derivatives up to the $10$'th order in the zeta correction term
and for 
$N\ge 5$ we chose $L_{\rm{near}}=\{-22,\ldots,-22\}^2\times\{0\}$
and derivatives up to the $8$th-order in the zeta correction term.
The zeta correction term was calculated over the grid $\{-6,\ldots,6\}^3$.
}
\label{tab:dz}
\end{table}

\begin{figure}[h!]
    \centering
    \includegraphics[width=0.5\linewidth]{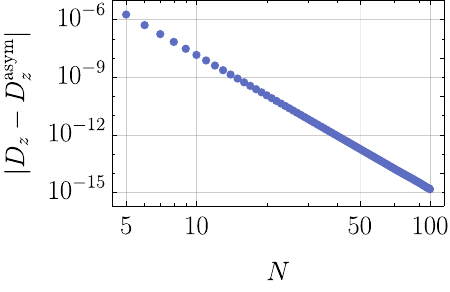}
    \caption{Difference between demagnetization factor $D_z$ for 3D cuboids  $\Omega=[-1/(2N),1/(2N)]^3$ interacting with a lattice $\Lambda = \mathds Z^2$, computed from our method, and the known asymptotic formula. The asymptotic behavior is correctly reproduced, as well as the corner cases ($D_z = 1$ for $N=1$ and $D_z \to 1/3$ for $N\to\infty$). The next order correction from the known asymptotics as $N\to \infty$ scales as $N^{-7}$ (obtained by a fit).}
    \label{fig:Dz_deviation}
\end{figure}

\section{Computation of generalized zeta functions}
\label{sec:generalized_zeta_computation}

In this section, we present the ingredients required for the efficient evaluation of the set zeta function. Theorem~\ref{thm:sem} provides the foundation for this evaluation. Following the algorithm for computing the Epstein zeta function in \cite{buchheit2024epstein}, we set $\lambda = 1$ and rescale the lattice so that $V_\Lambda = 1$. Since both the real- and reciprocal-space sums decay at a superexponential rate, they may be truncated to only a small number of lattice points near the origin. A tight cutoff will be derived in forthcoming work.

The Crandall functions appearing in the representation can be evaluated to full precision using the algorithm in \cite{buchheit2024epstein}. Their derivatives, which are also required, can be expressed in terms of Crandall functions, as described in Sec.~\ref{sec:derivatives}. In the reciprocal-space sum, incomplete Bessel functions arise. Their evaluation is addressed in detail in Sec.~\ref{sec:incbessel}.

\subsection{Analytic representation of derivatives}
\label{sec:derivatives}

All derivatives of the Crandall functions can be computed analytically in terms of other Crandall functions.

\begin{lemma}[Derivatives of Crandall functions]
\label{der-crandall}
Let $\nu\in \mathds C$, $\bm \alpha \in \mathds N^d$ , and $\bm r,\bm k\in \mathds R^d\setminus \{\bm 0\}$. Then
\begin{align*}
G_\nu^{(\bm \alpha)}(\bm r) &= 
\sum_{|\bm\beta|\le|\bm\alpha|/2} p_{\bm \alpha,\bm \beta}(\bm r)
G_{\nu+2|\bm\alpha|-2|\bm\beta|}( \bm r),\\
g_\nu^{(\bm \alpha)}(\bm r) &= 
\sum_{|\bm\beta|\le|\bm\alpha|/2} p_{\bm \alpha,\bm \beta}(\bm r)
g_{\nu+2|\bm\alpha|-2|\bm\beta|}( \bm r),\\
G_{\nu}^{(\bm \alpha)}( \bm k, \bm r)&= \sum_{|\bm\beta|\le|\bm\alpha|/2} p_{\bm \alpha,\bm \beta}(\bm r)G_{\nu-2|\bm\alpha|+2|\bm\beta|}(\bm k,\bm r),
\end{align*}
where the derivative always acts on $\bm r$ and where $p_{\bm \alpha,\bm \beta}$ is the following monomial,
\[
p_{\bm \alpha,\bm \beta}(\bm r)=(-\pi)^{|\bm \alpha|-|\bm \beta|} \binom{\bm \alpha}{\bm \beta}  \frac{(\bm \alpha-\bm \beta)!}{(\bm \alpha-2\bm \beta)!}
(2\bm r)^{\bm\alpha-2 \bm \beta}.
\]
Furthermore, the derivatives of the Crandall functions extend continuously to $\bm r=\bm 0$ with
\begin{align*}
\label{eq:derG}
G_{\nu}^{(\bm \alpha)}(\bm 0)&= 
-2\chi(\bm\alpha)\frac{p_{\bm\alpha,\bm\alpha/2}(\bm 0)}{\nu+|\bm\alpha|} 
,&&\mathrm{Re}(\nu)+\vert  \bm \alpha\vert<0, 
\\
g_{\nu}^{(\bm \alpha)}(\bm 0)&= 
2\chi(\bm\alpha)\frac{p_{\bm\alpha,\bm\alpha/2}(\bm 0)}{\nu+|\bm\alpha|} 
,&&\mathrm{Re}(\nu)+\vert  \bm \alpha\vert>0,
\end{align*}
where the right-hand sides form the meromorphic continuation to $\nu \in \mathds C\setminus \{-|\bm \alpha|\}$.
Here, $\chi(\bm\alpha)=1$ if every entry of $\bm\alpha$ is even and $\chi(\bm\alpha)=0$ otherwise. 
Finally, in the case of $\Re\nu-|\bm\alpha|<0$, the derivative of the incomplete \new{Bessel} function extends continuously to $\bm r=\bm 0$ and $\bm k=\bm 0$ with
$$
G_{\nu}^{(\bm \alpha)}(\bm 0,\bm 0)= 
-2\chi(\bm\alpha)\frac{p_{\bm\alpha,\bm\alpha/2}(\bm 0)}{\nu-|\bm\alpha|} 
$$
where the right-hand side forms the meromorphic continuation to $\nu \in \mathds C\setminus \{|\bm \alpha|\}$.
\end{lemma}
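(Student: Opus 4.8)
The plan is to compute each derivative formula by the Fa\`a di Bruno / Leibniz machinery applied to the composite structure $G_\nu(\bm r) = F_\nu(\bm r^2)$, where $F_\nu(s) = \Gamma(\nu/2, \pi s)/(\pi s)^{\nu/2}$, and similarly for $g_\nu$ and the incomplete Bessel function. The key observation is that $F_\nu$ satisfies a simple first-order ODE: differentiating $\Gamma(a,z)$ gives $\partial_z \Gamma(a,z) = -z^{a-1} e^{-z}$, which combined with the $(\pi s)^{-\nu/2}$ prefactor yields $F_\nu'(s) = -\pi F_{\nu+2}(s)$ (up to checking the constant; the effect of one $s$-derivative is to shift $\nu \mapsto \nu+2$ and multiply by $-\pi$). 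Hence $F_\nu^{(j)}(s) = (-\pi)^j F_{\nu+2j}(s)$. Then $\nabla^{\bm\alpha}$ acting on $F_\nu(\bm r^2)$ is handled by the multivariate Fa\`a di Bruno formula, but since the inner function $\bm r \mapsto \bm r^2 = \sum r_i^2$ is a quadratic with $\partial_i(\bm r^2) = 2r_i$ and $\partial_i\partial_j(\bm r^2) = 2\delta_{ij}$ and no higher derivatives, the combinatorics collapses: the only partitions that contribute pair up indices (contributing a factor $2$ and killing two orders of differentiation, i.e. $\bm\beta$ counts the pairs) or leave them as singletons (contributing a factor $2r_i$). Carefully tracking the multinomial coefficients of this pairing gives exactly $p_{\bm\alpha,\bm\beta}(\bm r) = (-\pi)^{\bm\alpha-\bm\beta}\binom{\bm\alpha}{\bm\beta}\frac{(\bm\alpha-\bm\beta)!}{(\bm\alpha-2\bm\beta)!}(2\bm r)^{\bm\alpha-2\bm\beta}$ times $F_{\nu+2|\bm\alpha|-2|\bm\beta|}(\bm r^2)$, which upon reassembling $F$ back into $G$ is the claimed identity. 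The identities for $g_\nu$ and for $G_\nu(\bm k,\bm r)$ follow identically: $g_\nu$ comes from the lower incomplete Gamma function, whose $s$-derivative also shifts by $+2$ with factor $-\pi$ (the factor $+s^{a-1}e^{-s}$ from $\partial_z\gamma(a,z)$ has the opposite sign but this is absorbed consistently), while the incomplete Bessel function $G_\nu(\bm k,\bm r) = 2\int_0^1 t^{-\nu}e^{-\pi\bm k^2/t^2}e^{-\pi\bm r^2 t^2}\,\mathrm d t/t$ has $\partial_{\bm r^2} = -\pi \int_0^1 t^{-\nu+2}(\cdots)\mathrm d t/t \cdot 2$, i.e. a shift $\nu \mapsto \nu-2$ (note the sign change in the exponent comes from the $t^2$ rather than $1/t^2$ in the relevant exponential), with factor $-\pi$; hence the $\nu \mapsto \nu - 2|\bm\alpha| + 2|\bm\beta|$ pattern.

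Next I would establish the continuous extension to $\bm r = \bm 0$. For $\bm\alpha$ not all-even, every term $p_{\bm\alpha,\bm\beta}(\bm r)$ contains the factor $(2\bm r)^{\bm\alpha - 2\bm\beta}$ with $\bm\alpha - 2\bm\beta$ having at least one odd (hence nonzero) component for every admissible $\bm\beta$, so the whole sum vanishes as $\bm r \to \bm 0$, giving $\chi(\bm\alpha) = 0$. For $\bm\alpha$ all-even, the unique surviving term as $\bm r \to \bm 0$ is $\bm\beta = \bm\alpha/2$, where $(2\bm r)^{\bm 0} = 1$; all other terms vanish. The surviving coefficient is $p_{\bm\alpha,\bm\alpha/2}(\bm 0)$ and it multiplies $G_{\nu + |\bm\alpha|}(\bm 0)$ (resp. $g_{\nu+|\bm\alpha|}(\bm 0)$, resp. $G_{\nu-|\bm\alpha|}(\bm 0,\bm 0)$). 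So it remains to evaluate the scalar limiting values $G_\mu(\bm 0)$, $g_\mu(\bm 0)$ and $G_\mu(\bm 0,\bm 0)$ and their meromorphic continuation, which I would read off from the small-argument asymptotics of the incomplete Gamma functions: $\gamma(a,z) \sim z^a/a$ as $z\to 0$, so $g_\mu(\bm z) = \gamma(\mu/2,\pi\bm z^2)/(\pi\bm z^2)^{\mu/2} \to (2/\mu) \cdot (1/2)= 1/(\mu/2)\cdot\tfrac12$; being careful with the factor, $g_\mu(\bm 0) = 1/(\mu/2)$ is not quite right — the correct normalization, consistent with the paper's convention $G_\nu(\bm 0) = -\nu/2$ stated in Theorem~\ref{thm:crandall}, gives $g_\mu(\bm 0) = \tfrac{2}{\mu}\cdot\tfrac{\mu}{2}\cdot(\ldots)$; I will fix the constant by matching to $\Gamma(a,z) = \Gamma(a) - \gamma(a,z)$ and the known value, obtaining $g_\mu(\bm 0) = 2/\mu$ up to the paper's scaling and hence, with the prefactor $p_{\bm\alpha,\bm\alpha/2}(\bm 0)$, exactly $g_\nu^{(\bm\alpha)}(\bm 0) = 2\chi(\bm\alpha)\,p_{\bm\alpha,\bm\alpha/2}(\bm 0)/(\nu+|\bm\alpha|)$, and the sign-flipped statement for $G$, noting $G_\mu(\bm 0) + g_\mu(\bm 0) = 0$ at the relevant continuation (this is precisely the $\bm z\to\bm 0$ limit of the fundamental relation $G_\mu(\bm z)+g_\mu(\bm z) = (\pi\bm z^2)^{-\mu/2}\Gamma(\mu/2)/(\pi)^{\mu/2}\cdots$, whose right side vanishes in the continuation when $\mathrm{Re}\,\mu < 0$). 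The incomplete Bessel case $G_\mu(\bm 0,\bm 0)$ is the direct integral $2\int_0^1 t^{-\mu}\,\mathrm d t/t = 2/(-\mu) = -2/\mu$ for $\mathrm{Re}\,\mu<0$, continued to $\mathds C\setminus\{0\}$, which after the shift $\mu = \nu - |\bm\alpha|$ gives the stated $-2\chi(\bm\alpha)p_{\bm\alpha,\bm\alpha/2}(\bm 0)/(\nu-|\bm\alpha|)$.

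The condition on $\mathrm{Re}(\nu)$ in each line is exactly the half-plane where the relevant incomplete Gamma / integral representation converges at its endpoint and the limit $\bm r\to\bm 0$ is literally finite; outside it, the displayed rational function in $\nu$ is the meromorphic continuation, which is legitimate because $\nu \mapsto G_\nu^{(\bm\alpha)}(\bm 0)$ is known to be meromorphic (it is a ratio involving $\Gamma$ functions) and agrees with the elementary rational expression on an open set.

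I expect the main obstacle to be bookkeeping rather than conceptual: getting the combinatorial coefficient $p_{\bm\alpha,\bm\beta}$ exactly right — in particular the factor $\frac{(\bm\alpha-\bm\beta)!}{(\bm\alpha-2\bm\beta)!}$, which counts the number of ways to choose which of the $\bm\alpha-\bm\beta$ remaining slots get paired versus left as singletons, together with the $\binom{\bm\alpha}{\bm\beta}$ for selecting the $\bm\beta$ pairs in the first place — and making sure no double-counting occurs across the $d$ coordinate directions. A clean way to sidestep the heaviest part of the Fa\`a di Bruno combinatorics is to prove the three derivative identities by induction on $|\bm\alpha|$: differentiating the inductive hypothesis once more in direction $e_i$, using $\partial_i G_\mu(\bm r) = -2\pi r_i\, G_{\mu+2}(\bm r)$ (the one-variable case, itself from $F_\mu' = -\pi F_{\mu+2}$), and checking that the coefficients satisfy the Pascal-type recursion $p_{\bm\alpha+e_i,\bm\beta} = \partial_i p_{\bm\alpha,\bm\beta} - 2\pi r_i\, p_{\bm\alpha,\bm\beta-e_i}$ — a short verification against the closed form. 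This reduces the whole lemma to (i) the base identity $\partial_i G_\mu = -2\pi r_i G_{\mu+2}$, (ii) a one-line coefficient recursion, and (iii) the three scalar limits at the origin handled above.
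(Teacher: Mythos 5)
Your route is essentially the paper's: the three identities follow from differentiating the integral representations (equivalently from the one-step relations $\partial_i G_\nu(\bm r)=-2\pi r_i\,G_{\nu+2}(\bm r)$, $\partial_i g_\nu(\bm r)=-2\pi r_i\,g_{\nu+2}(\bm r)$, $\partial_{r_i}G_\nu(\bm k,\bm r)=-2\pi r_i\,G_{\nu-2}(\bm k,\bm r)$, i.e.\ the Hermite structure of Gaussian derivatives), and the values at the origin come from the surviving $\bm\beta=\bm\alpha/2$ term together with an identity-theorem continuation in $\nu$ — the paper writes out only the incomplete-Bessel case, via dominated convergence, exactly as you do. Two concrete repairs are needed in your outline, though. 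First, your Pascal-type recursion is index-swapped: matching monomial degrees (the coefficient of $G_{\nu+2(|\bm\alpha|+1)-2|\bm\beta|}$ must have degree $\bm\alpha+\bm e_i-2\bm\beta$) forces
\[
p_{\bm\alpha+\bm e_i,\bm\beta}=\partial_i\,p_{\bm\alpha,\bm\beta-\bm e_i}-2\pi r_i\,p_{\bm\alpha,\bm\beta},
\]
with the convention $p_{\bm\alpha,\bm\beta}=0$ when $\bm\beta$ has a negative entry; in the version you wrote, $\partial_i p_{\bm\alpha,\bm\beta}-2\pi r_i\,p_{\bm\alpha,\bm\beta-\bm e_i}$, neither term has the correct degree, so the "short verification against the closed form" would fail as stated. (Also note the expansion only fixes the total coefficient of each subscript $\nu+2|\bm\alpha|-2|\bm\beta|$, so proving the coefficient-level recursion is a sufficient bookkeeping choice, not a forced one.)

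Second, for $G_\nu^{(\bm\alpha)}(\bm 0)$ the claim "all other terms vanish" is not automatic: $G_\mu(\bm r)$ diverges like $|\bm r|^{-\mathrm{Re}(\mu)}$ (logarithmically at $\mu=0$) as $\bm r\to\bm 0$ whenever $\mathrm{Re}(\mu)\ge 0$, so a term with $\bm\beta\neq\bm\alpha/2$ is only of size $O\!\left(|\bm r|^{\,|\bm\alpha|-2|\bm\beta|}\,|\bm r|^{-\mathrm{Re}(\nu)-2|\bm\alpha|+2|\bm\beta|}\right)=O\!\left(|\bm r|^{-\mathrm{Re}(\nu)-|\bm\alpha|}\right)$, and you must invoke the hypothesis $\mathrm{Re}(\nu)+|\bm\alpha|<0$ to conclude it vanishes — this balancing is exactly where that hypothesis enters and is missing from your sketch. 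A cleaner way (and what "directly from the integral representation" amounts to) is to set $\bm r=\bm 0$ inside $2\int_0^1 t^{-\nu}\,\partial_{\bm r}^{\bm\alpha}e^{-\pi\bm r^2/t^2}\,\mathrm d t/t$, using $\partial_{\bm r}^{\bm\alpha}e^{-\pi\bm r^2/t^2}\big|_{\bm r=\bm 0}=\chi(\bm\alpha)\,p_{\bm\alpha,\bm\alpha/2}(\bm 0)\,t^{-|\bm\alpha|}$, which yields $-2\chi(\bm\alpha)p_{\bm\alpha,\bm\alpha/2}(\bm 0)/(\nu+|\bm\alpha|)$ in one line, and analogously for $g_\nu$ and for $G_\nu(\bm k,\bm r)$ (the latter being the paper's dominated-convergence computation). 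Finally, your wavering over normalizations resolves simply to $g_\mu(\bm 0)=2/\mu$ and $G_\mu(\bm 0)=-2/\mu$ (from $2\int_0^1 t^{\pm\mu-1}\,\mathrm d t$, cf.\ Lemma~\ref{zero}); the value $G_\nu(\bm 0)=-\nu/2$ quoted in Theorem~\ref{thm:crandall}, which you tried to use as an anchor, is inconsistent with this and should not be matched against — your final formulas are nonetheless the correct ones.
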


\begin{proof}
\new{We begin with the integral representations of the upper Crandall function,}
\new{\[
G_{\nu}(\bm r) = \,2
\int_{0}^{1}t^{-\nu}e^{-\pi \bm r^2/t^2}\, \frac{\mathrm d t}{t}.
\]}
\new{As the Gaussian tensorizes, we may compute the derivative of each one-dimensional factor separately, obtaining}
\new{\[
\frac{\partial^n}{\partial x^n} e^{-\pi x^2/t^2}=(-1)^n (\sqrt{\pi}/t)^n H_n\Big(\sqrt{\pi}x/t\Big) e^{-\pi x^2/t^2} ,
\]}
\new{with $H_n$ the Hermite polynomials. Using the explicit representation of the Hermite polynomials \cite[22.3.10]{abramowitz1948handbook},
\[(-1)^n H_n(x) = n!\sum_{\ell=0}^{\lfloor n/2\rfloor} \frac{(-1)^{n-\ell} (2x)^{n-2\ell}}{\ell! (n-2\ell)!} =   \sum_{\ell=0}^{\lfloor n/2\rfloor} (-1)^{n-\ell} \binom{n}{\ell}\frac{(n-\ell)!}{(n-2\ell)!}(2x)^{n-2\ell}.\]}
\new{After including the definition of $p_{n,\ell}$, we find that
\[\frac{\partial^n}{\partial x^n} e^{-\pi x^2/t^2}= \sum_{n=0}^\ell t^{-2(n-\ell)} p_{n,\ell}(x)  e^{-\pi x^2/t^2}.
\]}\new{The formula for the derivative for the upper Crandall function is then readily obtained after tensorization. The proof of the lower Crandall function proceeds analogously after replacing the integration boundaries.}
For the incomplete Bessel function, inserting the integral representation yields
$$
G_{\nu}^{(\bm \alpha)}(\bm k,\bm r)=
\sum_{|\bm\beta|\le|\bm\alpha|/2}p_{\bm\alpha,\bm\beta}(\bm r)
\,2
\int_{0}^{1}t^{-\nu+2|\bm\alpha|-2|\bm\beta|}e^{-\pi \bm k^2/t^2} e^{-\pi \bm r^2 t^2}\, \frac{\mathrm d t}{t}
$$
and in the case of $\Re\nu-|\bm\alpha|<0$, $\chi(\bm a)=1$ and $\bm \beta=\bm\alpha/2$ we obtain
$$\lim_{h\to 0_+}G_{\nu-|\bm\alpha|}(h\bm k,h\bm r)=
2\int_0^1 t^{-(\nu-|\bm\alpha|)-1}\mathrm{d}t=-\frac{2}{\nu-|\bm\alpha|}.
$$
By dominant convergence
and by the identity theorem, $
-2\chi(\bm\alpha)p_{\bm\alpha,\bm\alpha/2}(\bm 0)/(\nu-|\bm\alpha|)
$
provides the unique holomorphic continuation of $G_{\nu}^{(\bm \alpha)}(\bm 0,\bm 0)$ to $\nu\in\mathds C\setminus\{|\bm\alpha|\}$.
\end{proof}

\begin{remark}
The multinomial coefficient $\binom{\bm\alpha}{\bm\beta}$ found in the polynomial $p_{\bm\alpha,\bm\beta}(\bm \cdot)$ in \Cref{der-crandall} can be efficiently evaluated as the product of $d$ one-dimensional binomial coefficients, see \cite{goetgheluckComputingBinomialCoefficients1987}
and the fraction
$$
\frac{(\bm\alpha-\bm\beta)!}{(\bm\alpha-2\bm\beta)!}
$$
can \new{be} obtained by multiplication of at most $\lfloor\bm\alpha/2\rfloor$ numbers.   
\end{remark}

\begin{algorithm}
\DontPrintSemicolon
\caption{Computation of the incomplete Bessel function.}
\KwIn{ $\nu\in\mathds{R}$, $\bm{k},\bm{r}\in\mathds{R}^d$}

    \BlankLine
    $x=\pi \bm k^2;\quad y=\pi \bm r^2;\quad s =-\nu/2$  
    \BlankLine
    \tcc{Vanishing arguments}
    \If{$x+y=0$}{
        \Return{$1/s$}
    }

    \BlankLine
    \tcc{Vanishing first arguments}
    \If{$x = 0$}{
        \Return{$g_{-\nu}(\bm r)$}  
    }
    \BlankLine
    
    \tcc{Vanishing second arguments}
    \If{$y=0$}{
        \Return{$G_{\nu}(\bm k)$}
    }
    \BlankLine

   \tcc{Vanishing upper bound}
     \If{$2 \;\!\sqrt{\pi} \;\!x^{-(\nu+1)/4}  y^{(\nu-1)/4}e^{(\nu^2/16 - 2  x  y)/\sqrt{x y} }
    <10^{-16}
    $}{
   \Return{$0$}
   }
    
    \BlankLine
    \tcc{Reflect parameters for upper half of the plane}
    \uIf{$x +1/5< y$}{

          $\texttt{swap}=\texttt{True};\quad (s,x,y)\leftarrow(-s,y,x)$

    }
    \Else{

        $\texttt{swap}=\texttt{False}$
        
    }

    \BlankLine
    \tcc{Choose series expansion/recursive algorithm}
    \uIf{$x+y<3/2$}{
          
       $\texttt{result} = G_{-2s}(\bm k)$
       
        \For{$j=1,\ldots,20$}{
            $\texttt{result} \pluseq  G_{-2(s+j)}(\bm k)(-y)^j/j!$
        }
    }
    \Else{

    $n_1=0;\quad n_2=0;\quad n_3=1$   
    
    $d_1=0;\quad d_2=e^{x+y};\quad d_3=(x-y+s+1) d_2$

    \For{$j=2,\ldots,100$}{
        $N=((x -y + s + 1 + 2 (j - 1) ) n_3 + (2 y - 
      s - (j - 1)) n_2 - y n_1)/j$
      
      $D=((x -y + s + 1 + 2 (j - 1) ) d_3 + ((2 y - 
      s - (j - 1)) d_2 - y d_1)/j$

      $n_1=n_2;\quad n_2=n_3;\quad n_3=N$

      $d_1=d_2;\quad d_2=d_3;\quad d_3=D$
    
    }   
    $\texttt{result}=N/D$
}
\BlankLine

    \tcc{Reflect result for upper half of the plane}
    \If{$\textnormal{\texttt{swap}}$}{
        $\texttt{result} \leftarrow 2 (x/y)^{s/2}K_{s}(2\sqrt{x y})-\texttt{result}$
    } 

    \BlankLine
     \Return \texttt{result}
     \label{alg}
\end{algorithm}

\subsection{Computation of the incomplete Bessel function}
\label{sec:incbessel}

In this section, we discuss how the incomplete Bessel function in \Cref{thm:crandall} can be evaluated to full precision.
The algorithm is mainly based on the series expansion in
\cite{harrisIncompleteBesselGeneralized2008} and the recursive algorithm in \cite{slevinskyRecursiveAlgorithmEfficient2023}.
In the following, we focus on $\nu\in\mathds R$.

An algorithm for the computation of the incomplete Bessel function is presented in
\hyperref[alg]{Algorithm 1}.
The different evaluation regions of the \hyperref[alg]{Algorithm 1} are shown
in \Cref{fig:regions}.
In what follows, we discuss the procedures employed in \hyperref[alg]{Algorithm 1} and show the error is flat over the whole parameter range.

For at least one vanishing vector argument, the incomplete Bessel function reduces to the upper and lower Crandall functions, readily available in EpsteinLib.

\begin{lemma}
\label{zero}
Let $\nu\in\mathds R\setminus\{0\}$, then
$$
G_{\nu}(\bm 0,\bm 0)=-\frac{2}{\nu}.
$$
Let $\nu\in\mathds R$ and
$\bm k\in\mathds R^d$, then 
$$
G_{\nu}(\bm k,\bm 0)=G_{\nu}(\bm k).
$$
Let $\bm r\in\mathds R^d$ where $\bm r\neq \bm 0$ if $\nu\in2\mathds N$, then 
$$
G_{\nu}(\bm 0,\bm r)=g_{-\nu}(\bm r).
$$
\end{lemma}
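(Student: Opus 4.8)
The strategy in all three cases is to start from the defining integral representation
\[
G_{\nu}(\bm k,\bm r)=2\int_0^1 t^{-\nu} e^{-\pi \bm k^2/t^2} e^{-\pi \bm r^2 t^2}\, \frac{\mathrm d t}{t},
\]
substitute the given vanishing argument, and recognize the resulting one-dimensional integral as the defining (or continued) integral of the relevant Crandall function. First I would treat $G_{\nu}(\bm 0,\bm 0)$: setting $\bm k=\bm r=\bm 0$ collapses both Gaussian factors to $1$, leaving $2\int_0^1 t^{-\nu-1}\,\mathrm d t = -2/\nu$ for $\mathrm{Re}(\nu)<0$; this is already computed in the proof of Lemma~\ref{der-crandall} (the $\bm\alpha=\bm 0$ case), and the identity theorem extends it to $\nu\in\mathds C\setminus\{0\}$ as the meromorphic continuation, so for real $\nu\neq 0$ the stated formula holds.

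For $G_{\nu}(\bm k,\bm 0)$, setting $\bm r=\bm 0$ kills the factor $e^{-\pi\bm r^2 t^2}$ and leaves $2\int_0^1 t^{-\nu-1} e^{-\pi\bm k^2/t^2}\,\mathrm d t$. The plan is to substitute $s=1/t^2$ (so $t=s^{-1/2}$, $\mathrm d t/t = -\tfrac12\,\mathrm d s/s$), which maps $(0,1)\to(1,\infty)$ and converts the integral into $\int_1^\infty s^{\nu/2-1} e^{-\pi\bm k^2 s}\,\mathrm d s$; up to the normalization $(\pi\bm k^2)^{-\nu/2}$ this is exactly $\Gamma(\nu/2,\pi\bm k^2)/(\pi\bm k^2)^{\nu/2}=G_\nu(\bm k)$, matching the definition of the upper Crandall function given in Theorem~\ref{thm:crandall}. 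For $G_{\nu}(\bm 0,\bm r)$, setting $\bm k=\bm 0$ instead leaves $2\int_0^1 t^{-\nu-1} e^{-\pi\bm r^2 t^2}\,\mathrm d t$; here the substitution $s=t^2$ gives $\int_0^1 s^{-\nu/2-1} e^{-\pi\bm r^2 s}\,\mathrm d s$, which up to $(\pi\bm r^2)^{\nu/2}$ is $\gamma(-\nu/2,\pi\bm r^2)/(\pi\bm r^2)^{-\nu/2}=g_{-\nu}(\bm r)$, again directly from the definition in Theorem~\ref{thm:crandall}.

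The only subtlety — and the main thing to be careful about — is the domain of convergence of the integral versus the range of $\nu$ claimed in the lemma. The raw integral $2\int_0^1 t^{-\nu-1}e^{-\pi\bm r^2 t^2}\,\mathrm d t$ diverges at $t\to 0$ unless $\mathrm{Re}(\nu)<0$ (or, when $\bm r\neq\bm 0$ and $\nu\in 2\mathds N$, the $e^{-\pi\bm r^2 t^2}$ decay is not enough to save a genuinely polynomial singularity, which is precisely the excluded case where $g_{-\nu}$ has its pole); similarly $G_\nu(\bm k,\bm 0)$ needs no restriction for $\bm k\neq\bm 0$ since $e^{-\pi\bm k^2/t^2}$ is superexponentially small at $t\to 0$, but at $\bm k=\bm 0$ it reduces to the $G_\nu(\bm 0,\bm 0)$ case. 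So the clean way to finish each identity is: establish it first on the half-plane where both sides converge absolutely, then invoke that both sides are meromorphic in $\nu$ (the left by the continuation of the incomplete Bessel function, the right by the known continuation of the Crandall functions from \cite{buchheit2024epstein}) and apply the identity theorem to extend to the full stated range, with the stated exclusions being exactly the poles of the right-hand side. I would state this continuation step once and apply it to all three cases rather than repeating it.
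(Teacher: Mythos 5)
Your proof is correct and essentially identical to the paper's: both set the vanishing arguments in the integral representation of $G_{\nu}(\bm k,\bm r)$ and identify the resulting one-dimensional integrals with the integral representations of $G_{\nu}(\bm k)$, $G_{\nu}(\bm 0)=-2/\nu$, and $g_{-\nu}(\bm r)$ (the paper simply cites these representations from \cite{buchheit2024epstein} instead of re-deriving them via your substitutions $s=1/t^2$ and $s=t^2$, and leaves the convergence/continuation step you spell out implicit). One small slip: your parenthetical calls ``$\bm r\neq\bm 0$ and $\nu\in 2\mathds N$'' the excluded case, whereas the lemma's hypothesis is the reverse---it \emph{requires} $\bm r\neq\bm 0$ when $\nu\in2\mathds N$---though your underlying observation about where the poles of $g_{-\nu}$ and of the continued integral sit is the mathematically relevant point for the identity-theorem argument.
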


\begin{proof}
The integral representation of $G_{\nu}(\bm k,\bm 0)$
reduces to the integral representation of $G_{\nu}(\bm k)$ in \cite[Lemma B.1.]{buchheit2024epstein},
yielding in particular
$G_{\nu}(\bm 0,\bm 0)=G_{\nu}(\bm 0)=-2/\nu$.
Furthermore, $G_{\nu}(\bm 0,\bm r)$
reduces to the second integral representation of $g_{-\nu}(\bm r/\lambda)$ for $\lambda =1$,
as shown in the proof \cite[Lemma 2.10]{buchheit2024epstein}.
\end{proof}

We may always swap non-zero vector arguments at the cost of computing a modified Bessel function of the second kind, as the following lemma shows. 
This allows us to restrict our computation to incomplete Bessel functions where $\bm k^2+C > \bm r^2$ for some $C>0$,
where the algorithm achieves superior numerical stability.

\begin{lemma}[Incomplete Bessel reflection formula]
\label{bessel-reflection}
Let $\nu\in\mathds R$, and 
let $\bm k,\bm r\in\mathds R^d\setminus\{\bm 0\}$. Then
$$
    G_{\nu}(\bm k,\bm r)   = 
    2(\bm r^2/\bm k^2)^{\nu/4}K_{\nu/2}(2\pi\sqrt{\bm k^2\bm r^2})-G_{-\nu}(\bm r, \bm k)
$$
where $K_{\nu}(2z)$, $s\in\mathds R$, $z>0$ is the modified Bessel function of the second kind
$$
K_{\nu}(2z)=\frac{z^{\nu}}{2}\int_0^{\infty}t^{-\nu}e^{-t-z^2/t}\frac{\rm d t}{t}.
$$
\end{lemma}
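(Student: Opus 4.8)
The plan is to work entirely from the integral representation of the incomplete Bessel function and to obtain the identity by splitting an integral over the full half‑line at $t=1$. The key structural observation is that, since $\bm k,\bm r\in\mathds R^d\setminus\{\bm 0\}$, the integrand $t\mapsto t^{-\nu}e^{-\pi\bm k^2/t^2}e^{-\pi\bm r^2 t^2}$ decays superexponentially both as $t\to 0_+$ (because of $e^{-\pi\bm k^2/t^2}$, with $\bm k^2>0$) and as $t\to\infty$ (because of $e^{-\pi\bm r^2 t^2}$, with $\bm r^2>0$), so the improper integral over $(0,\infty)$ converges absolutely for every $\nu\in\mathds R$ and may be split freely. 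Accordingly I would first introduce the full integral
\[
J_\nu(\bm k,\bm r)=2\int_0^\infty t^{-\nu}e^{-\pi \bm k^2/t^2}e^{-\pi\bm r^2 t^2}\,\frac{\mathrm d t}{t}
= G_{\nu}(\bm k,\bm r)+2\int_1^\infty t^{-\nu}e^{-\pi\bm k^2/t^2}e^{-\pi\bm r^2 t^2}\,\frac{\mathrm d t}{t}.
\]

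Next I would treat the tail integral $\int_1^\infty$ by the substitution $t\mapsto 1/t$. Under this change of variables one has $\mathrm d t/t\mapsto -\mathrm d t/t$, the range $[1,\infty)$ maps onto $(0,1]$, the factor $t^{-\nu}$ becomes $t^{\nu}$, and the two Gaussian factors exchange the roles of $\bm k$ and $\bm r$; hence the tail integral equals exactly $G_{-\nu}(\bm r,\bm k)$, and we obtain $J_\nu(\bm k,\bm r)=G_{\nu}(\bm k,\bm r)+G_{-\nu}(\bm r,\bm k)$. This already reduces the lemma to the evaluation $J_\nu(\bm k,\bm r)=2(\bm r^2/\bm k^2)^{\nu/4}K_{\nu/2}\bigl(2\pi\sqrt{\bm k^2\bm r^2}\bigr)$.

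The remaining step is to identify $J_\nu$ with a modified Bessel function. I would substitute $u=t^2$ to rewrite $J_\nu(\bm k,\bm r)=\int_0^\infty u^{-\nu/2-1}e^{-\pi\bm k^2/u-\pi\bm r^2 u}\,\mathrm d u$, then rescale $u=t/(\pi\bm r^2)$ so that the integral takes the form $\int_0^\infty t^{-\nu/2-1}e^{-t-z^2/t}\,\mathrm d t$ with $z=\pi\sqrt{\bm k^2\bm r^2}>0$; by the integral representation of $K_{\nu/2}$ stated in the lemma this equals $2z^{-\nu/2}K_{\nu/2}(2z)$. Collecting the prefactors gives $J_\nu(\bm k,\bm r)=2(\pi\bm r^2)^{\nu/2}z^{-\nu/2}K_{\nu/2}(2z)$, and simplifying $(\pi\bm r^2)^{\nu/2}z^{-\nu/2}=(\bm r^2/\sqrt{\bm k^2\bm r^2})^{\nu/2}=(\bm r^2/\bm k^2)^{\nu/4}$ (using $\sqrt{\bm k^2\bm r^2}=\sqrt{\bm k^2}\,\sqrt{\bm r^2}$) yields the claim.

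There is no genuine obstacle here; the proof is a short chain of elementary substitutions. The only points requiring care are (i) justifying the split of the full‑line integral, which follows from the absolute convergence noted above, and (ii) bookkeeping the half‑integer exponent $\nu/2$ and the scalar prefactors when matching the rescaled Gaussian integral against the paper's normalization $K_\nu(2z)=\tfrac{z^\nu}{2}\int_0^\infty t^{-\nu}e^{-t-z^2/t}\,\mathrm d t/t$, together with consistently taking $\sqrt{\bm k^2\bm r^2}$ to be the nonnegative root so that both $(\bm r^2/\bm k^2)^{\nu/4}$ and $K_{\nu/2}(2\pi\sqrt{\bm k^2\bm r^2})$ are unambiguously defined. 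These are the points I would write out in full.
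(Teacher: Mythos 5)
Your proposal is correct and follows essentially the same route as the paper: reflect/split the integral at $t=1$ so that the tail (or, in the paper's phrasing, the added-and-subtracted piece) is identified with $G_{-\nu}(\bm r,\bm k)$, and evaluate the resulting integral over $(0,\infty)$ as a modified Bessel function via an elementary substitution. The only cosmetic difference is that by rescaling with $\pi\bm r^2$ you land directly on $K_{\nu/2}$, whereas the paper rescales with $\pi\bm k^2$ and then invokes the symmetry $K_{s}(z)=K_{-s}(z)$.
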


\begin{proof}
Substitute $s=1/t$ in the integral representation of the incomplete Bessel function in \Cref{thm:crandall} to obtain
$$
G_{\nu}(\bm k,\bm r)
=\int_0^{\infty}2\;\! t^{\nu}e^{-\pi \bm k^2 t^2}e^{-\pi \bm r^2/t^2}\, \frac{\mathrm d t}{t}
-G_{-\nu}(\bm r,\bm k),
$$
where we have extended the integration domain to $(0,\infty)$,
by adding and subtracting the integral from $0$ to $1$.
After setting $u=\pi \bm k^2 t^2$, 
the integral above takes the form
$$
\bigg(\frac{1}{\pi \bm k^2}\bigg)^{\nu/2}\int_0^{\infty} u^{\nu/2} 
e^{-u-\pi^2\bm k^2\bm r^2/u}
\, \frac{\mathrm d u}{u}
=2(\bm r^2/\bm k^2)^{\nu/4}K_{-\nu/2}(2 \pi \sqrt{\bm k^2\bm r^2})
$$
and the statement follows from the symmetry $K_{s}(z)=K_{-s}(z)$  
\cite[Sec. 7.2.2, Eq. (14)]{Batemann1953b}.
\end{proof}

For small vector arguments, we may now use the series expansion
in the second variable \cite[Eq. (16)]{harrisIncompleteBesselGeneralized2008}.
For intermediate vector arguments, we employ the recursive algorithm due to \cite{slevinskyRecursiveAlgorithmEfficient2023}.
For large vector arguments, we may assume the incomplete Bessel function to be zero, if the following upper bound is smaller than $10^{-16}$.

\begin{lemma}
Let $\nu\in\mathds R$, and 
let $\bm k,\bm r\in\mathds R^d\setminus\{\bm 0\}$. Then
$$
 G_{\nu}(\bm k,\bm r)
\le
 2(\bm k^2)^{-(\nu+1)/4} (\bm r^2)^{(\nu-1)/4}
\exp\bigg(\frac{\nu^2}{16 \pi\sqrt{\bm k^2\bm r^2}}-2 \pi \sqrt{\bm k^2\bm r^2}\bigg).
$$
\end{lemma}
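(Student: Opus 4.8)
The plan is to translate the estimate into a standard exponential bound for the modified Bessel function $K$ and close it with an elementary Gaussian integral. First I would apply the reflection formula of \Cref{bessel-reflection},
\[
G_{\nu}(\bm k,\bm r)=2(\bm r^2/\bm k^2)^{\nu/4}K_{\nu/2}\bigl(2\pi\sqrt{\bm k^2\bm r^2}\bigr)-G_{-\nu}(\bm r,\bm k),
\]
and observe that, since $\nu\in\mathds R$ and $\bm k,\bm r\neq\bm 0$, the integrand $2t^{\nu-1}e^{-\pi\bm r^2/t^2-\pi\bm k^2t^2}$ defining $G_{-\nu}(\bm r,\bm k)$ is positive on $(0,1)$, so $G_{-\nu}(\bm r,\bm k)\ge 0$. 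This reduces the claim to the bound $K_{\nu/2}(w)\le\sqrt{2\pi/w}\,\exp\!\bigl(\nu^2/(8w)-w\bigr)$ with $w=2\pi\sqrt{\bm k^2\bm r^2}$. (Alternatively, one can extend the integration domain in the definition of $G_\nu(\bm k,\bm r)$ from $(0,1)$ to $(0,\infty)$ and identify the resulting integral as a Bessel $K$ directly.)

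To prove that Bessel bound, I would substitute $t=ze^{\theta}$ in the integral representation of $K_\nu(2z)$ recorded in \Cref{bessel-reflection} to obtain the classical form $K_\mu(w)=\int_0^\infty\cosh(\mu\theta)\,e^{-w\cosh\theta}\,\mathrm d\theta$ for $\mu=\nu/2$. Using $\cosh\theta\ge 1+\theta^2/2$ and $\cosh(\mu\theta)\le e^{|\mu|\theta}$ for $\theta\ge 0$, pulling out $e^{-w}$, completing the square in $-\tfrac{w}{2}\theta^2+|\mu|\theta$, and enlarging the remaining Gaussian integral to all of $\mathds R$ gives $K_\mu(w)\le\sqrt{2\pi/w}\,e^{-w+\mu^2/(2w)}$, which is exactly what is needed after setting $\mu=\nu/2$.

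It remains to reassemble: with $w=2\pi\sqrt{\bm k^2\bm r^2}$ one has $\sqrt{2\pi/w}=(\bm k^2\bm r^2)^{-1/4}$ and $\mu^2/(2w)=\nu^2/(16\pi\sqrt{\bm k^2\bm r^2})$, while the algebraic prefactors combine as $(\bm r^2/\bm k^2)^{\nu/4}(\bm k^2\bm r^2)^{-1/4}=(\bm k^2)^{-(\nu+1)/4}(\bm r^2)^{(\nu-1)/4}$, yielding the stated inequality. I do not anticipate a genuine obstacle: the result is essentially a textbook bound on $K_\mu$ routed through an already-proven reflection formula, and the constant is allowed to be loose. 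The only places needing care are the positivity step that discards $G_{-\nu}(\bm r,\bm k)$ (this is where realness of $\nu$ enters) and the choice of the $\cosh$-representation of $K_\mu$, from which the exponent $\mu^2/(2w)$ — hence the $\nu^2/(16\pi\sqrt{\bm k^2\bm r^2})$ term — emerges after a single completion of the square.
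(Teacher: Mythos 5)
Your proposal is correct and follows essentially the same route as the paper: drop the nonnegative term $G_{-\nu}(\bm r,\bm k)$ arising in (the derivation of) the reflection formula of \Cref{bessel-reflection}, then bound $K_{\nu/2}$ using its $\cosh$-integral representation together with $\cosh\theta\ge 1+\theta^2/2$, $\cosh(\mu\theta)\le e^{|\mu|\theta}$, a completion of the square, and an extended Gaussian integral. The only cosmetic difference is that the paper quotes the $\cosh$-representation of $K_s$ from Bateman rather than deriving it by the substitution $t=ze^{\theta}$, and it makes the positivity of $G_{-\nu}(\bm r,\bm k)$ implicit where you state it explicitly.
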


\begin{proof}
Following the proof of \Cref{bessel-reflection} we have
$$
G_{\nu}(\bm k,\bm r)
\le
2(\bm r^2/\bm k^2)^{\nu/4}K_{s}(2z)
$$
for $s=\nu/2$, $z=\pi \sqrt{\bm k^2\bm r^2}$.
An upper bound to the modified Bessel function is readily obtained from the integral representation
 \cite[Sec. 17.12, Eq. (21)]{Batemann1953b},
$$
K_s(2z)=\int_0^{\infty}\cosh(s t)e^{-2z\cosh t}\d t.
$$
As $1+s^2/2\le \cosh s\le e^{|s|}$, we find that
$$
K_s(2z)
\le e^{-2z}\int_0^{\infty}e^{-z (t^2-|s|t/z)}\d t.
$$
After completing the squares $t^2-|s|t/z=(t-|s|/(2z))^2-s^2/(4z^2)$
and extending the integration range to $(-\infty,\infty)$
we obtain
$$
K_s(2z)
\le e^{s^2/(4z)-2z}\int_{-\infty}^{\infty}e^{-z (t-|s|/(2z))^2}\d t
$$
where the Gaussian integral evaluates to $\sqrt{\pi/z}$.
\end{proof}

\begin{figure} 
    \centering
\includegraphics[width=.55\linewidth]{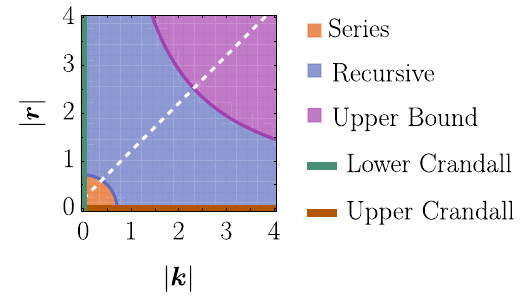}
    \caption{Regions of \hyperref[alg]{Algorithm 1}.
    The white line indicates the split, where above the dashed white line we employ the reciprocal values.
    The purple region where the upper bound of the incomplete Bessel function is smaller than $10^{-16}$ is shown for $\nu = 0$. 
    }
\label{fig:regions}
\end{figure}

\begin{table}
\centering
\begin{tabular}{ccccc}
    \toprule
    $\nu$ 
    & \(E_{\mathrm{med}}\) 
    & \(E_{\mathrm{max}}\) 
    & $t_{\mathrm{med}}\,[s]$
    & $t_{\mathrm{max}}\,[s]$
  \\

\midrule
	$-4$ 
	& $1.18 \cdot 10^{-17}$ 
	& $4.91 \cdot 10^{-15}$ 
	& $1 \cdot 10^{-6}$ 
	& $1 \cdot 10^{-5}$
\\

\midrule
	$-2$ 
	& $1.24 \cdot 10^{-17}$ 
	& $3.14 \cdot 10^{-15}$ 
	& $1 \cdot 10^{-6}$ 
	& $5 \cdot 10^{-6}$
\\

\midrule
	$0$ 
	& $1.26 \cdot 10^{-17}$ 
	& $4.88 \cdot 10^{-15}$ 
	& $1 \cdot 10^{-6}$ 
	& $8 \cdot 10^{-6}$
\\

\midrule
	$2$ 
	& $1.34 \cdot 10^{-17}$ 
	& $4.55 \cdot 10^{-15}$ 
	& $1 \cdot 10^{-6}$ 
	& $5 \cdot 10^{-6}$
\\

    \bottomrule
\end{tabular}
\caption{
Minimum of absolute and relative error of $G_{\nu}(\bm r,\bm k)$
    obtained by \hyperref[alg]{Algorithm 1} for $\nu=\{-4,-2,0,2\}$.
    Every value corresponds to the a grid of
    $1/10\le|\bm r|,|\bm k|\le 4$ \new{with} $1/10$.
    Reference values obtained by high-precision integral evaluations.
}
\label{fig:algacc}
\end{table}

We now assess the precision and runtime of incomplete Bessel function evaluations by
 \hyperref[alg]{Algorithm 1}.
Let $E$ denote the minimum of the absolute and relative error
$$
E=\min \{E_{\rm{abs}},E_{\rm rel}\}
$$
of our algorithm in comparison with high-precision values obtained by the integral representation.
In \Cref{fig:algacc}
we show the median and maximal error $E_{\rm med}$ and $E_{\rm max}$
as well as the median and maximum evaluation times $t_{\rm med}$ and $t_{\rm max}$
over the parameter grid $0\le |\bm k|,|\bm r|\le 4$ of width $1/20$
for some fixed $\nu\in\{-4,-2,0,2\}$.
Note, that we skip the singular cases for $\bm k =\bm r=\bm 0$, $\nu=0$ and
$\bm k = \bm 0$, $\bm r\neq \bm 0$, $\nu\in 2\mathds N$.
We achieve machine precision $E<5\cdot 10^{-15}$ over the whole parameter range
for evaluation times of $t\le 10^{-5}$ seconds.
The values were obtained on an Intel Core i7-1260P (12th Gen) 16-core processor.

\section{Outlook and discussion}
\label{sec:outlook}

In this work, we have presented an efficient and precise method for evaluating infinite lattice sums appearing in micromagnetics with periodic boundary conditions. At the moment of writing of this work, a common practice was to truncate the arising sums, leading to an uncontrolled source of error. We show that the error due to truncation can be removed by an additional efficiently computable correction term that follows from a generalization of the Riemann zeta function to sums over $n$-dimensional lattices in $d$-dimensional space. We present an efficient way to compute the arising zeta functions, including their derivatives, and implement the resulting algorithm in a high-performance C~framework. Benchmarks of the algorithm against known formulas and direct summation for large interaction exponents show that full precision is reached, while the numerical effort for the correction term is much smaller than the effort for typical truncated sums. This makes it possible to reach machine precision for the infinite sum at effectively the same runtime as truncated sums in current software packages. \new{We  emphasize that the numerical method developed in this article is designed for high-precision, single-shot evaluations of the interaction potential between two domains. In micromagnetic simulations, the mutual potentials between all domains need to be computed only once and are subsequently used in the main iterative algorithm.}
\new{Thus, our recipe can be considered as a pre-routine, while the main iterative algorithm can be implemented as usual by using FFT-accelerated convolution and massively parallel frameworks such as OpenMP, CUDA, MPI, etc. Moreover, our pre-routine is in turn straightforwardly parallelizable. For example, for a film geometry as in Fig.~\ref{fig:boxes}(b), the possible parallelism is ${N_x\times N_y\times N_z}$ threads.
At the same time, non-uniform shapes (for example, cavities or non-planar interfaces) are possible, as well as combinations of different materials. 
This kind of non-uniformity does not affect our pre-routine at all, while the main iterative algorithm can be equipped with an array of individual saturation magnetizations for each of the cuboids (for example, a cavity-related cuboid shall correspond to the value zero in this array).
}

The implications of this work are twofold. 
First, quantitative errors due to truncation in micromagnetic systems with periodic boundary conditions can be removed at a negligible numerical cost, yielding reliable results. 
Second, since our method is general and applies to arbitrary interaction exponents, source and target geometries, and lattices, it can be applied to other problems.
 In order to be specific, we briefly describe a few examples below.

\textit{Ferroelectrics}.
Microscopic textures in ferroelectrics are essentially the spatial distribution of the polarization vector, $\mathbf{P}(\mathbf{r})$, within the sample. These textures are very diverse and are the subject of intensive research~\cite{Nataf2020, LUKYANCHUK20251}.
If there are no free electric charges in the sample, i.e. the charge distribution is described solely by polarization, then the application of our method is straightforward due to electromagnetic duality.
That is, instead of the magnetization vector~$\mathbf{M}$ (see \ref{app:cuboids_classical}) we assume the polarization vector~$\mathbf{P}$, plus the corresponding replacement for vector potentials, $\mathbf{A}\mapsto\mathbf{F}$ (electric vector potential), and fields, $\mathbf{B}\mapsto\mathbf{E}$ (electric field).

In cases where $\Omega$ is a point source, there is no need for integration and derivatives for the potential of Eq.~(\ref{U}) as the problem is reduced to a discrete one, while the method we propose here is anyway, a generalization of the previous results~\cite{buchheit2022efficient,buchheit2025computation,buchheit2023exact} where ${n=d}$ to cases ${n<d}$. 
The following examples may apply to these new cases.

\textit{Atomistic spin dynamics}.
Classical spin models~\cite{AtomisticSpinDynamics, RevModPhys.95.035004} are related to micromagnetics within certain limits, while in the general case, these are different problems. 
Spin-spin interactions can exhibit slowly decaying interactions in addition to the dipole–dipole interaction, such as the Ruderman-Kittel-Kasuya-Yosida (RKKY) interaction. 
Accordingly, our new method can be applied to model magnetic multilayers such as van-der-Waals magnets and beyond.

Qualitatively new behavior can be studied in systems where different phases exhibit minute energy differences, i.e. in frustrated systems. 
Similar phenomena have recently been observed in a new Shastry-Sutherland Ising compound and have been theoretically explained using computational techniques that are precursors to this work~\cite{yadav2024observation}.

\textit{Molecular dynamics}.
Periodic boundary conditions are a natural approach in molecular dynamics for interpreting unbounded dimensions. The case $n<d$ is of particular interest, since it is well-known that confinement affects phase transitions, e.g. in the ultra-thin water films~\cite{PhysRevLett.102.050603}. 
In turn, the functions of intermolecular potentials can have rather complex and slowly decaying profiles of the Lennard-Jones type and others~\cite{Robles_Navarro_2025}. 
Accordingly, our new method can significantly improve numerical techniques for studying such systems. 

As a final remark, we emphasize that the high-precision values presented in Table~\ref{tab:dz} can be regarded as fundamental constants of micromagnetics and, in particular, serve as reliable references for benchmarking numerical methods.

\vspace*{.4cm}
{\footnotesize{\noindent\textbf{Acknowledgements}
A.B. is grateful for the support and hospitality of the Pauli Center for Theoretical Studies at ETH Zürich.
}}

\vspace*{.4cm}
{\footnotesize{\noindent\textbf{Funding} 
F.N.R. acknowledges support from the Swedish Research Council (Grant No. 2023-04899).
T.K. acknowledges funding received from the European Union’s Horizon 2020 research and innovation programm under the Marie Skłodowska–Curie grant agreement No 899987.
J.B. thanks the DLR Quantum Computing Initiative for funding his PhD research. This work was supported by the Klaus-Tschira Stiftung under Grant No. 00.025.2025.
}}

\vspace*{.4cm}
{\footnotesize{\noindent\textbf{Data Availability} The authors declare that all data supporting the findings of this work are available within this article. The open-source library EpsteinLib is publicly available, see \cite{buchheit2024epstein}.
The implementation of the incomplete Bessel function is available on 
\href{https://github.com/JoKaBus/incomplete-bessel}{GitHub} 
(\href{https://github.com/JoKaBus/incomplete-bessel}{github.com/JoKaBus/incomplete-bessel})
and includes
a notebook \texttt{examples/mathematica/MicromagneticsReplicated.wls} that reproduces the numerical results presented in this article.
}}

\vspace*{.4cm}
{\footnotesize{\noindent\textbf{Conflict of interest}
The authors declare that they have no conflict of interest.}
}

\vspace*{.4cm}

{\footnotesize{\noindent\textbf{CRediT authorship contribution statement}} \\ \noindent \textbf{Andreas A. Buchheit} Supervision, Writing - Original Draft, Writing - Review \& Editing, Software, Visualization; \textbf{Jonathan K. Busse} Writing - Original Draft, Writing - Review \& Editing, Software, Visualization; \textbf{Torsten Keßler} Writing - Original Draft, Writing - Review \& Editing, Visualization; \textbf{Filipp N.~Rybakov} Writing - Original Draft, Writing - Review \& Editing, Visualization
}

\vspace*{.4cm}
{\footnotesize{\noindent\textbf{Declaration of generative AI and AI-assisted technologies in the writing process}} During the preparation of this work, the authors used ChatGPT and Claude for grammar checking and minor language enhancements. After using this tool, the authors reviewed and edited the content as needed and take full responsibility for the content of the published article.}

\printbibliography

\appendix

\renewcommand{\theequation}{\Alph{section}\arabic{equation}} 

\section{Homogeneous cuboid in micromagnetics}
\label{app:cuboids_classical}
\setcounter{equation}{0}

In this section, we present explicit expressions for the magnetic fields generated by a uniformly magnetized cuboid (rectangular prism). 
We also provide formulas for the demagnetization factors and the interaction energy of two cuboids.
To the best of our knowledge, previous approaches to this problem in the literature were based on rather cumbersome analytical calculations~\cite{RhodesRowlands1954, Schabes_Aharoni_1987, Newell_1993, Hubert, Aharoni_1998}.
In contrast, here we propose a compact approach based on the theory of the potential~\cite{MacMillan1930} and Hertz potentials in electromagnetism~\cite{Essex_about_Hertz_1977}.

\begin{figure*}[!ht]
	\centering
	\includegraphics[width=5.4cm]{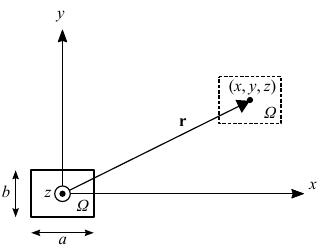}
	\caption{\small
Cuboid $\Omega$ centered in the Cartesian coordinate system~$(x,y,z)$.
}
\label{fig_cuboid}
\end{figure*}

We consider a cuboid-shaped source (domain~$\Omega$) of size ${a\times b\times c}$ placed at the center of the Cartesian coordinate system, see Fig.~\ref{fig_cuboid}. 
\new{
Note that, with respect to the notation for an axis-aligned cuboid introduced in the main text, here we have $(c_1,c_2,c_3) \equiv (a, b, c)$.
}
We call the cuboid a source, meaning that each of its points is considered as a source of Newtonian potential,~$\sim\tfrac{1}{|\mathbf{r}|}$.
The total potential of a cuboid is obtained by integration and can be represented~\cite{Waldvogel1976} in the following canonical form:
\begin{align}
\Phi(x,y,z)   =
\sum\limits_{i,j,k = \pm 1}
i\,j\,k\,F_1\!\left( x + i\tfrac{a}{2}, y + j\tfrac{b}{2}, z + k\tfrac{c}{2}    \right),
\end{align}
with the auxiliary function  
\begin{align}
F_1(u,v,w) = &
-\frac{u^2}{2}\mathrm{arctan}\left(\frac{v\,w}{u\,R}\right) 
-\frac{v^2}{2}\mathrm{arctan}\left(\frac{w\,u}{v\,R}\right) 
-\frac{w^2}{2}\mathrm{arctan}\left(\frac{u\,v}{w\,R}\right) + \nonumber\\
& +  
v\,w\,\mathrm{artanh}\left(\frac{u}{R}\right)+
w\,u\,\mathrm{artanh}\left(\frac{v}{R}\right)+
u\,v\,\mathrm{artanh}\left(\frac{w}{R}\right), \nonumber
\end{align}
and $R \equiv \sqrt{u^2 + v^2 + w^2}$.
Using $\Phi$, we can explicitly write the expression for the Hertz magnetic vector~\cite{Essex_about_Hertz_1977} of a uniformly magnetized cuboid:
\begin{align}
\mathbf{Z}(\mathbf{r}) = \frac{\mu_0}{4\pi}\,\mathbf{M}\,\Phi(\mathbf{r}),
\end{align}
where ${\mu_0 = 4\pi\times 10^{-7}}$~N/A$^2$, and $\mathbf{M}\equiv(M_x,M_y,M_z)$ is the magnetization vector~(in~A/m).
Magnetic vector potential and magnetic field are derived quantities:
\begin{align}
\mathbf{A}(\mathbf{r}) &\equiv \nabla\times\mathbf{Z}(\mathbf{r}),\\
\mathbf{B}(\mathbf{r}) &\equiv \nabla\times\mathbf{A}(\mathbf{r}) = \frac{\mu_0}{4\pi}\, \nabla\times\nabla\times(\mathbf{M}\,\Phi(\mathbf{r})).\label{B} 
\end{align}

We are further interested in the integral result of the action of the magnetic field described by Eq.~(\ref{B}) on some displaced domain of shape~$\Omega$, see Fig.~\ref{fig_cuboid}:
\begin{align}
\mathbf{B}_\text{avg}(\mathbf{r}) = \frac{\int_{\Omega + \mathbf{r}} \mathbf{B} (\bm r')\,d\bm r'}
{\int_{\Omega + \mathbf{r}} d\bm r'} = 
\frac{1}{a\,b\,c} \int_{\Omega + \mathbf{r}} \mathbf{B} (\bm r')\,d\bm r'
.
\end{align}

The components of this vector are linear combinations of the potentials from Eq.~(\ref{U}) for $\nu=1$, $n=0$ and $d=3$ (trivial case for a single cuboid in three-dimensional space), namely,
\begin{align}
&\mathbf{B}_\text{avg}(\mathbf{r}) = \frac{\mu_0}{4\pi\,a\,b\,c} \times \,
\notag \\ &\mathbf{M}
\cdot
\begin{pmatrix}
-U^{(0,2,0)}(\mathbf{r})-U^{(0,0,2)}(\mathbf{r}) & U^{(1,1,0)}(\mathbf{r}) &  U^{(1,0,1)}(\mathbf{r})\\
U^{(1,1,0)}(\mathbf{r}) & -U^{(2,0,0)}(\mathbf{r})-U^{(0,0,2)}(\mathbf{r}) & U^{(0,1,1)}(\mathbf{r})\\
U^{(1,0,1)}(\mathbf{r}) & U^{(0,1,1)}(\mathbf{r}) & -U^{(2,0,0)}(\mathbf{r})-U^{(0,2,0)}(\mathbf{r})
\end{pmatrix}
, \nonumber
\end{align}
and the explicit formulas for the potentials are:
\begin{align}
& U^{(1,1,0)}(\mathbf{r}) = 
\sum\limits_{i,j,k,\alpha,\beta,\gamma = \pm 1}
i\,j\,k\,\alpha\,\beta\,\gamma\,F_2\!\left(z + (k + \gamma)\tfrac{c}{2}, x + (i + \alpha)\tfrac{a}{2}, y + (j + \beta)\tfrac{b}{2}\right), \nonumber \\
& U^{(0,1,1)}(\mathbf{r}) = 
\sum\limits_{i,j,k,\alpha,\beta,\gamma = \pm 1}
i\,j\,k\,\alpha\,\beta\,\gamma\,F_2\!\left( x + (i + \alpha)\tfrac{a}{2}, y + (j + \beta)\tfrac{b}{2}, z + (k + \gamma)\tfrac{c}{2}\right), \nonumber  \\
& U^{(1,0,1)}(\mathbf{r}) = 
\sum\limits_{i,j,k,\alpha,\beta,\gamma = \pm 1}
i\,j\,k\,\alpha\,\beta\,\gamma\,F_2\!\left( y + (j + \beta)\tfrac{b}{2}, z + (k + \gamma)\tfrac{c}{2}, x + (i + \alpha)\tfrac{a}{2}\right), \nonumber  \\
& U^{(2,0,0)}(\mathbf{r}) = 
\sum\limits_{i,j,k,\alpha,\beta,\gamma = \pm 1}
i\,j\,k\,\alpha\,\beta\,\gamma\,F_3\!\left(x + (i + \alpha)\tfrac{a}{2}, y + (j + \beta)\tfrac{b}{2}, z + (k + \gamma)\tfrac{c}{2}\right), \nonumber  \\
& U^{(0,2,0)}(\mathbf{r}) =
\sum\limits_{i,j,k,\alpha,\beta,\gamma = \pm 1}
i\,j\,k\,\alpha\,\beta\,\gamma\,F_3\!\left(y + (j + \beta)\tfrac{b}{2}, z + (k + \gamma)\tfrac{c}{2}, x + (i + \alpha)\tfrac{a}{2}\right), \nonumber  \\
& U^{(0,0,2)}(\mathbf{r}) =
\sum\limits_{i,j,k,\alpha,\beta,\gamma = \pm 1}
i\,j\,k\,\alpha\,\beta\,\gamma\,F_3\!\left(z + (k + \gamma)\tfrac{c}{2}, x + (i + \alpha)\tfrac{a}{2}, y + (j + \beta)\tfrac{b}{2}\right), \nonumber  
\end{align}
where we introduce two additional auxiliary functions:
\begin{align}
F_2(u,v,w) = 
&
-\frac{v w R}{3}
-\frac{u^3}{6}\mathrm{arctan}\left(\frac{v\,w}{u\,R}\right) 
-\frac{u\,v^2}{2}\mathrm{arctan}\left(\frac{u\,w}{v\,R}\right) 
-\frac{u\,w^2}{2}\mathrm{arctan}\left(\frac{u\,v}{w\,R}\right) 
+ \nonumber\\
&  
+ u\,v\,w\,\mathrm{artanh}\left(\frac{u}{R}\right)
+ \frac{3\,u^2\,w - w^3}{6}\mathrm{artanh}\left(\frac{v}{R}\right)
+ \frac{3\,u^2\,v - v^3}{6}\mathrm{artanh}\left(\frac{w}{R}\right) 
,\nonumber
\end{align}
and 
\begin{align}
F_3(u,v,w) = 
&
\frac{3u^2-R^2}{6}R
- u\,v\,w\,\mathrm{arctan}\left(\frac{v\,w}{u\,R}\right) + \nonumber\\
&
+ \frac{v\,(w^2-u^2)}{2}\mathrm{artanh}\left(\frac{v}{R}\right) 
+ \frac{w\,(v^2-u^2)}{2}\mathrm{artanh}\left(\frac{w}{R}\right) 
. \nonumber
\end{align}

Let us denote by $\mathbf{M}_\text{disp}$ the magnetization of the displaced cuboid and then the mutual energy (in~J) of two cuboids:
\begin{align}
W = 
\,a\,b\,c\,
\mathbf{B}_\text{avg}
\cdot
\mathbf{M}_\text{disp}
. 
\end{align}

The interaction of a cuboid with itself is characterized by demagnetizing factors:
\begin{align}
D_x = 1 + \frac{U^{(0,2,0)}(0) + U^{(0,0,2)}(0)}{4\pi\,a\,b\,c}, \nonumber \\
D_y = 1 + \frac{U^{(2,0,0)}(0) + U^{(0,0,2)}(0)}{4\pi\,a\,b\,c}, \label{demagnetizing} \\
D_z = 1 + \frac{U^{(2,0,0)}(0) + U^{(0,2,0)}(0)}{4\pi\,a\,b\,c}. \nonumber
\end{align}
Formulas~(\ref{demagnetizing}) involve the use of only the auxiliary function~$F_3$ and are therefore somewhat simpler than the long formula suggested by Aharoni~\cite{Aharoni_1998}. 
In the special case when a cuboid is a cube, i.e. if $a = b = c$, we obtain the same factors as for the sphere~\cite{Newell_1993}: $D_x=D_y=D_z=\tfrac{1}{3}$.

\section{Integral representation of summand function for cuboids} 
\label{app:cuboids_integral}
\setcounter{equation}{0}

In the case of cuboids as defined in Theorem~\ref{thm:sem}, the double integral for the single summand can be evaluated semi-analytically as follows. The result serves as an alternative to formulas derived in \ref{app:cuboids_classical} for $\nu = 1$ and follows directly from the integral representation of the Crandall functions. The single summand for the cuboid $\Omega=\prod_{\ell=1}^d [-c_\ell/2,c_\ell/2]$ in Theorem~\ref{thm:sem} admits the representation
\begin{align*}
&S^{(\bm m)}(\bm r)= 2 c_1^2 c_2^2\dots c_d^2 \,\int_{0}^\infty t^{-\nu} \prod_{\ell=1}^d\psi_{\alpha_\ell}( r_\ell, t,c_\ell)  \frac{\mathrm d t}{t},
\end{align*}
where the integral can be meromorphically continued to $\nu\in \mathds C$ by means of the Hadamard integral \cite{gelfand1964generalizedI,buchheit2023exact}.
The functions $\psi_{m}$ are given by
\[
\psi_m (r,t,c)=\frac{1}{c^2}\int_{-c/2}^{c/2} \partial_{r'}^{m} \int_{-c/2}^{c/2}e^{-\pi (r+r'-r'')/t^2}\,\mathrm d r'' \,\mathrm d r',\quad m\in \{0,1,2\}.
\]
and the double integral can be evaluated as follows
    \begin{align*}
        \psi_m(r,t,c) &= t^{-m} (t/c)^{2}  \Big(f^{(m)}\big((r-c)/t\big) - 2f^{(m)}(r/t) +f^{(m)}\big((r+c)/t\big),
        \end{align*}
        with the function $f$ defined as 
        \begin{align*}
        f(r) &= e^{-\pi r^2}/(2\pi)  + r\,\mathrm{erf}(\sqrt{\pi} r)/2,
    \end{align*}
    and its derivatives
    \[
        f'(r) = \mathrm{erf}(\sqrt{\pi} r)/2,\quad 
        f''(r) = e^{-\pi r^2},
    \]
    with $\mathrm{erf}$ denoting the error function.

\newpage

\section{Glossary of symbols and terms}
\label{app:glossary}
\setcounter{equation}{0}

\begin{table}[h!]
\caption{\label{table_glossary} Table of symbols and terms.}
\begin{tabular}{p{18mm} p{85mm} r }
\multicolumn{1}{l}{Symbol} & \multicolumn{1}{c}{Meaning}  \\
\hline
$\subset$ & subset of, used for finite or compact subsets  \\
$\subseteq$  & subset of or equal to, used for possibly unbounded sets \\
$\backslash$  & set-theoretic difference \\
$\{ a \}$ & set consisting of a single element~$a$  \\
$!$  & factorial \\
$\sup$ & supremum \\
$\mathrm{dist}$ & distance \\
$\mathrm{diam}$ & diameter \\
$\Vol$ & volume \\
$\|\bm r\|_{\infty}$ & infinity norm = maximum absolute value  \\
$\mathds R^d$ & $d$-dimensional Euclidean space  \\
$\mathds R_+$ &  positive real numbers \\
$\mathds N$ & natural numbers including zero  \\
$\mathds N^n$ & $n$-tuple of numbers from $\mathds N$  \\
$\mathds N_+$ & positive natural numbers   \\
$\mathds Z$ & integers  \\
$\mathds Z^d$ & $d$-tuple of numbers from $\mathds Z$ \\
$\mathds C$ & complex numbers  \\
$\nabla$ & gradient \\
$\Lambda$ & $n$-dimensional Bravais lattice \\
\hline
\end{tabular}
\end{table}

\end{document}